\newcommand{\ga}{\alpha}
\newcommand{\gb}{\beta}
\newcommand{\gc}{\gamma}
\newcommand{\gd}{\delta}
\newcommand{\bigslant}[2]{{\raisebox{.4em}{$#1$}\Big/\raisebox{-.2em}{$#2$}}}
\newtheorem*{thm:maintheorem}{Theorem \ref{thm:maintheorem}}
\newtheorem{thm}{Theorem}[section]
\newtheorem{defin}[thm]{Definition}
\newtheorem{prop}[thm]{Proposition}
\newtheorem{lem}[thm]{Lemma}
\DeclareMathOperator{\sH}{H}
\DeclareMathOperator{\sP}{P}
\title{Characterising Trees and Hyperbolic Spaces by their Boundaries}
\author{Isobel Davies}
\address{Otto-von-Guericke Universit\"at Magdeburg, Universit\"atsplatz 2, 39106 Magdeburg}
\email{isobel.davies@ovgu.de}
\date{}
\begin{document}
 \captionsetup[figure]{labelfont={bf},name={Fig.},labelsep=space}
\makeatletter
\def\blfootnote{\xdef\@thefnmark{}\@footnotetext}
\makeatother
\blfootnote{\textbf{MSC2020 Subject Classification:} 05C05, 51M10.}
\blfootnote{\textbf{Keywords:} trees, hyperbolic spaces, cross ratio, proper CAT(-1) spaces, visual boundary.}
\begin{abstract}
We use the language of proper CAT(-1) spaces to study thick, locally compact trees, the real, complex and quaternionic hyperbolic spaces and the hyperbolic plane over the octonions. These are rank 1 Euclidean buildings, respectively rank 1 symmetric spaces of non-compact type. We give a uniform proof that these spaces may be reconstructed using the cross ratio on their visual boundary, bringing together the work of Tits and Bourdon.
\end{abstract}
\maketitle
\section{Introduction}
Euclidean buildings and symmetric spaces of non-compact type are two important families of metric spaces of non-positive curvature, with a number of common properties. For example, their maximal flat subspaces are all of the same dimension (this is the rank) and their boundary points admit the structure of a spherical building. We study the rank 1 case. The $n$-dimensional \textit{hyperbolic spaces} $\mathbb{K}\sH^n$ over $\mathbb{K}=\mathbb{R},\mathbb{C},\mathbb{H}$ ($n\geq 2$) and $\mathbb{O}$ ($n=2$) are defined in Section~\ref{Section:HyperbolicSpaces}. These are the rank 1 symmetric spaces of non-compact type. A (thick, locally compact, discrete) rank 1 Euclidean building is the metric realisation of a connected acyclic graph whose vertices have finite degree at least 3, we therefore refer to these spaces as \textit{trees}, see Section~\ref{Section:Trees}.

Tits gives a characterisation of trees $T$ in terms of their boundary points $\partial T$ and canonical valuation $\omega_T:(\partial T)^4\rightarrow\mathbb{R}$ in \cite{tits}. He then uses this to prove that all Euclidean buildings can be characterised by their boundary. Having already classified spherical buildings of dimension $\geq 2$ this allowed him to give a classification for Euclidean buildings of rank $\geq 3$. A detailed account of Tits' work is given in \cite{weiss}. 

Bourdon defines and studies the cross ratio $\omega$ on the boundary of a proper CAT(-1) space in \cite{bourdon96}. This simultaneously generalises the projective cross ratio on the boundary of the real hyperbolic plane and the canonical valuation defined by Tits for trees. Bourdon proves that a hyperbolic space $H$ is isometrically embedded in a proper CAT(-1) space $X$ if and only if there is a cross-ratio preserving topological embedding of $\partial H$ into $\partial X$. In particular, this characterises when two hyperbolic spaces are isometric, in terms of their boundary points and the cross ratio.

Our main result is a uniform reconstruction of trees and hyperbolic spaces, using their boundary points and the cross ratio. In Tits' work, he shows that a tree may be reconstructed using its boundary data. However, to the best of the author's knowledge, this was not known for hyperbolic spaces.

Before stating the main theorem, we briefly explain some notation. Tits parametrises the vertices of a tree $T$ using triples of distinct boundary points $\ga,\gb,\gc\in\partial T$, the corresponding point is the centre of the ideal tripod $\Delta(\ga,\gb,\gc)$. Bourdon generalises this, defining the point $p(\ga,\gb;\gc)$ in a proper CAT(-1) space $X$ for $\ga,\gb,\gc\in\partial X$, in terms of the Bourdon metric (see Definition~\ref{Def:SpecialPoint}). Let $[\ga,\gb]_\gc:\mathbb{R}\rightarrow [\ga,\gb]\subset X$ be the unique parametrisation of the geodesic line from $\ga$ to $\gb$ such that $[\ga,\gb]_{\gc}(0)=p(\ga,\gb;\gc)$. We define a model $\chi_{\ga\gb}$ for $[\ga,\gb]\subset X$, letting $(\ga,\gb,\gc,t)\in\chi_{\ga\gb}$ correspond to $[\ga,\gb]_{\gc}(t)$ (see Proposition~\ref{Prop:geodesicmetric}). 

Suppose that $X$ is a hyperbolic space or a tree. In Definition~\ref{Def:equivalencerelation}, we define an equivalence relation on the disjoint union of $\chi_{\ga\gb}$ over all distinct $\ga,\gb\in\partial X$, which models how the geodesics in $X$ intersect. The set of equivalence classes is denoted $\Omega(X)$ and is in bijection with $X$. Since any two points in $X$ are contained in a unique geodesic $[\ga,\gb]$, which may be parametrised by $[\ga,\gb]_{\gc}$ for any $\gc\in\partial X\setminus\{\ga,\gb\}$, we can write any two elements of $\Omega(X)$ as $(\ga,\gb,\gc,s)$ and $(\ga,\gb,\gc,t)$ for some $\ga,\gb,\gc\in\partial X$ and some $s,t\in\mathbb{R}$. The definition of $\Omega(X)$ uses only the boundary points and cross ratio. The following theorem therefore shows that one may construct a metric space $(\Omega(X),d_{\omega})$ isometric to $(X,d_X)$, using only the boundary data $(\partial X,\omega)$. 

\begin{thm}\label{thm:maintheorem}
Let $(X,d_X)$ be either a hyperbolic space or a tree. 
Then, the map
\begin{align*}
&d_{\omega}:\Omega(X)\times\Omega(X)\rightarrow\mathbb{R}\qquad
&d_{\omega}((\ga,\gb,\gc,s),(\ga,\gb,\gc,t)):=\abs{s-t}
\end{align*}
is a well-defined metric on $\Omega(X)$ and 
\begin{align*}
\varphi:\Omega(X)\rightarrow X\qquad
\varphi:(\ga,\gb,\gc,t)\mapsto[\ga,\gb]_{\gc}(t)
\end{align*}
is an isometry.
\end{thm}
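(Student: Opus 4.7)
The strategy is to obtain both conclusions --- well-definedness of $d_\omega$ and the fact that $\varphi$ is an isometry --- from one short computation, read in two directions. The key input is Proposition~\ref{Prop:geodesicmetric}, which establishes that $[\ga,\gb]_\gc:\mathbb{R}\to[\ga,\gb]$ is a unit-speed parametrisation of the geodesic, and the design of Definition~\ref{Def:equivalencerelation}, which makes the assignment $(\ga,\gb,\gc,t)\mapsto[\ga,\gb]_\gc(t)$ descend to a bijection $\Omega(X)\to X$. The latter is already recorded in the introduction, so $\varphi$ is automatically a well-defined bijection and the content left is the metric statement.

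Let $x,y\in\Omega(X)$. Using the remark that any two elements of $\Omega(X)$ admit a common parametrising triple, write $x=(\ga,\gb,\gc,s)$ and $y=(\ga,\gb,\gc,t)$. Then
\[
d_X(\varphi(x),\varphi(y))
\;=\; d_X\bigl([\ga,\gb]_\gc(s),\,[\ga,\gb]_\gc(t)\bigr)
\;=\; |s-t|,
\]
where the last equality is the unit-speed property. Reading this identity from left to right, $|s-t|$ equals a quantity depending only on $x,y\in\Omega(X)$, which shows that the definition of $d_\omega$ is independent of the chosen common triple, hence well-defined on $\Omega(X)\times\Omega(X)$. Reading it from right to left, it is exactly the isometry identity $d_\omega=d_X\circ(\varphi\times\varphi)$, so $d_\omega$ inherits the metric axioms (positivity, symmetry, separation, triangle inequality) from $d_X$, and $\varphi$ is an isometry.

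I expect the real difficulty to sit not in the theorem itself but in the supporting results: one must already know that $[\ga,\gb]_\gc$ is a unit-speed parametrisation, that distinct unordered pairs $\{\ga,\gb\}\ne\{\ga',\gb'\}$ of boundary points give distinct geodesics of $X$, and that the equivalence relation $\sim$ identifies precisely those quadruples representing the same point of $X$. With these inputs in place, the only potential subtlety --- that two different common triples $(\ga,\gb,\gc)$ and $(\ga',\gb',\gc')$ might yield different values of $|s-t|$ --- is bypassed entirely by pulling back through $\varphi$, since $d_X$ is intrinsically defined on $X$ and makes no reference to boundary coordinates.
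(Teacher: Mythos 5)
There is a genuine gap: you assume the hardest part of the theorem rather than proving it. You take as given that the assignment $(\ga,\gb,\gc,t)\mapsto[\ga,\gb]_{\gc}(t)$ ``descends to a bijection $\Omega(X)\to X$,'' citing the introduction. But the sentence in the introduction (``$\Omega(X)\ldots$ is in bijection with $X$'') is an announcement of what the theorem establishes, not an independently proven fact; treating it as an input makes the argument circular. Indeed, in your own closing paragraph you list ``the equivalence relation $\sim$ identifies precisely those quadruples representing the same point of $X$'' as a supporting result you expect to be available --- that statement \emph{is} the bijectivity of $\varphi$, and the paper proves it inside the proof of the theorem, not in a prior lemma.

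Concretely, two things must be checked and your proposal addresses neither. First, \emph{well-definedness} of $\varphi$: one must verify that the generators of $\sim$ in Definition~\ref{Def:equivalencerelation} are sent to genuine equalities in $X$; this is exactly Proposition~\ref{Prop:uniformintersection}. Second, and harder, \emph{injectivity}: if $[\ga,\gb]_{\gc}(s)=[\ga',\gb']_{\gc'}(t)$ in $X$, one must show $(\ga,\gb,\gc,s)\sim(\ga',\gb',\gc',t)$. This is not automatic, because $\sim$ is only \emph{generated} by relations between quadruples satisfying the cross-ratio condition $\omega(\ga,\gc;\gd,\gb)\oplus\omega(\ga,\gd;\gc,\gb)=1$, and many pairs of intersecting geodesics fail it directly: in $\mathbb{C}\sH^n$, $\mathbb{H}\sH^n$ or $\mathbb{O}\sH^2$ two intersecting geodesics need not span an isometrically embedded $\mathbb{R}\sH^2$ (so the Ptolemy equality of Theorem~\ref{Thm:hyperbolicintersects} fails), and in a tree the two geodesics may share a boundary point, so the four endpoints are not even distinct. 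The paper bridges this via Proposition~\ref{Prop:thirduniformintersects}: one produces a third geodesic $[\ga'',\gb'']$ through the common point whose endpoints satisfy the cross-ratio condition against both $[\ga,\gb]$ and $[\ga',\gb']$, reparametrises using Lemma~\ref{Lem:parametriseddistance} and Proposition~\ref{Prop:geodesicmetric}, and then chains two generating relations through $(\ga'',\gb'',\gc'',r)$. Your final computation --- that once $\varphi$ is a bijection, the unit-speed property gives $d_\omega=d_X\circ(\varphi\times\varphi)$ and hence both well-definedness of $d_\omega$ and the isometry claim --- is correct and matches the paper, but it is the easy tail of the argument.
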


In Section~\ref{Section:Preliminaries} we outline the necessary preliminaries for this paper, in particular we define hyperbolic spaces, trees and proper CAT(-1) spaces, recalling some of their properties. We also define the Gromov product and Bourdon metric on the visual boundary of a proper CAT(-1) space. 

In Section~\ref{Section:ProperCAT(-1)} we recall the definition of the cross ratio and outline what is known about its relationship with the geometry of proper CAT(-1) spaces. In particular, we define $\chi_{\ga\gb}$ and prove that this models $[\ga,\gb]$. We also recall the Ptolemy inequality, proven by Foertsch and Schroeder \cite{foertschschroeder} which gives a condition in terms of the cross ratio for when an ideal quadrilateral in a proper CAT(-1) space is isometric to an ideal quadrilateral in the real hyperbolic plane.

In Section~\ref{Section:Reconstruction} we prove a number of uniform propositions for trees and hyperbolic spaces, showing that one may use the cross ratio to understand how and when any two geodesics intersect. We then use this to define the equivalence relation on the disjoint union of all $\chi_{\ga\gb}$ and prove Theorem~\ref{thm:maintheorem}.

\section{Preliminaries}\label{Section:Preliminaries}
\subsection{Geodesics and The Visual Boundary} Let $(X,d)$ be a metric space. Let $I\subseteq \mathbb{R}$ be a closed interval. A curve $c:I\rightarrow X$ is called a \textit{geodesic} if $d(c(s),c(t))=\abs{s-t}$ for all $s,t\in I$. If $I=[a,b]$ then we call $c$ a \textit{geodesic segment} from $x=c(a)$ to $y=c(b)$ and denote its image $[x,y]$. If there exists a geodesic segment from $x$ to $y$ for all $x,y\in X$, then we say that $X$ is a \textit{geodesic metric space}. If $I=[0,\infty)$, then we call $c$ a \textit{geodesic ray} and if $I=\mathbb{R}$, then we call $c$ a \textit{geodesic line}.\\ The visual boundary of $X$ is defined to be the set of equivalence classes of geodesic rays
\begin{align*}
\partial X:=\{c:[0,\infty)\rightarrow X\mid d(c(s),c(t))=\abs{s-t}\}\mathord/\sim
\end{align*}
where $c\sim c'$ if and only if there exists $K>0$ such that $d(c(t),c'(t))<K$ for all $t\geq 0$. We will denote boundary points using greek letters and if a geodesic ray $c$ is in the equivalence class $\ga$, then we will write $c(\infty)=\ga$. For a more detailed introduction to geodesic metric spaces and the visual boundary, see \cite{bridsonhaefliger}.\\

\subsection{Symmetric Spaces of Non-Compact Type}
A \textit{symmetric space} is a connected Riemannian manifold $M$ such that for all $o\in M$, there exists an isometry $\varphi_o:M\rightarrow M$, fixing $o$ and reversing geodesics through $o$. It follows from the definition, that a symmetric space is geodesically complete and its isometry group acts transitively. If $M$ is a simply connected symmetric space with non-positive sectional curvature and no Euclidean de Rham factor, then it is said to be of \textit{non-compact type}.
 The \textit{rank} of a symmetric space of non-compact type is equal to the dimension of its maximal flat subspaces. i.e. subspaces which are isometric to Euclidean space, ordered by inclusion. The rank 1 symmetric spaces of non-compact type are the hyperbolic spaces defined in the next section. For more details on symmetric spaces of non-compact type, see \cite{eberlein}.

\subsection{The Hyperbolic Spaces}\label{Section:HyperbolicSpaces} Let $n\geq 2$ and let $\mathbb{K}$ denote either the field of real numbers $\mathbb{R}$, complex numbers $\mathbb{C}$ or the skew field of quaternions $\mathbb{H}$. Then one may define the following Hermitian form on  $\mathbb{K}^{n+1}$:
\begin{align}\label{Hermitian form}
\langle x\mid y\rangle :=\sum_{k=1}^n{\overline{x_k}y_k-\overline{x_{n+1}}y_{n+1}},    
\end{align}
where $x=(x_1,\ldots,x_{n+1})$ and $y=(y_1,\ldots,y_{n+1})$.  We define an equivalence relation on $\mathbb{K}^{n+1}\setminus\{0\}$ by $x\sim_{\mathbb{K}} y$ if and only if $x=\lambda y$ for some $\lambda\in\mathbb{K}$, the set of equivalence classes under this relation gives the $n$-dimensional \textit{projective space} $\mathbb{K}\sP^n$. The points of the $n$-dimensional \textit{hyperbolic space} $\mathbb{K}\sH^n$ are then defined to be the set of points $[x]\in \mathbb{K}\sP^n$ satisfying $\langle x\mid x\rangle<0$. The distance $d_H([x],[y])$ between two points of $\mathbb{K}\sH^n$ is defined to be the unique positive real number satisfying  
\begin{align}\label{hyperbolicmetric}
\cosh^2 d_H([x],[y])=\dfrac{\langle x\mid y\rangle\langle y\mid x\rangle}{\langle x\mid x\rangle\langle y\mid y\rangle},
\end{align}
where $x,y\in\mathbb{K}^{n+1}$ are any representatives of $[x],[y]\in\mathbb{K}\sH^n$. The above model is explained in detail in \cite[Chapter II.10]{bridsonhaefliger}.

Let $\mathbb{O}$ denote the division algebra of octonions and define $\langle x\mid y\rangle\in\mathbb{O}$ for $x,y\in\mathbb{O}^3$ as in (\ref{Hermitian form}), note that since $\mathbb{O}$ is not associative, this is not an Hermitian form. 
Denote by $\mathbb{O}^3_0$ the set of triples $x=(x_1,x_2,x_3)\in\mathbb{O}^3$ such that the subalgebra of $\mathbb{O}$ generated by $x_1,x_2,x_3$ is associative. 
We define an equivalence relation on $\mathbb{O}^3_0\setminus\{0\}$ by $x\sim_{\mathbb{O}} y$ if and only if $x=\lambda y$ and $\lambda,y_1,y_2,y_3$ are contained in an associative subalgebra of $\mathbb{O}$, in this case
$\langle  x\mid  x\rangle =\Bar{\lambda}\lambda\langle y\mid y\rangle\in\mathbb{R}$.    
The set of equivalence classes under $\sim_{\mathbb{O}}$ is the \textit{projective plane} $\mathbb{O}\sP^2$. The \textit{hyperbolic plane} $\mathbb{O}\sH^2$ is defined to be the set of points $[x]\in \mathbb{O}\sP^2$ satisfying $\langle x\mid x\rangle<0$. 
The distance $d_H([x],[y])$ between two points of $\mathbb{O}\sH^2$ is defined to be the unique positive real number satisfying (\ref{hyperbolicmetric}) where $x,y\in\mathbb{O}^3_0$ are any representatives of $[x],[y]$ with the property that for some $i\neq j$, $x_i$ and $y_j$ are real (cf. \cite{allcockop2} and \cite[Section 19]{mostow}).
The above model is based on \cite{allcockoh2}.

\subsection{Isometric Embeddings $\mathbb{R}\sH^2\hookrightarrow \mathbb{K}\sH^n$}\label{Section:IsometricEmbeddings}
 Since $\mathbb{R}^3\subset\mathbb{C}^3\subset\mathbb{H}^3\subset\mathbb{O}^3_0$, it is clear that there is a canonical isometric embedding of the real hyperbolic plane in the other hyperbolic spaces. In this section we look at other isometric embeddings of $\mathbb{R}\sH^2\hookrightarrow \mathbb{K}\sH^n$, where
$\mathbb{K}=\mathbb{R},\mathbb{C},\mathbb{H}$ $(n\geq 2)$ or $\mathbb{K}=\mathbb{O}$ ($n=2$), using the ideas in \cite[Theorem II.10.16]{bridsonhaefliger}. Let $[x]\in \mathbb{K}\sH^n$, since the isometry group acts transitively we may assume without loss of generality that $x=\begin{pmatrix}
    0,\ldots, 0, 1
\end{pmatrix}$ and we can model the tangent space at $[x]$ by $x^{\perp}$, where $x^{\perp}:=\{u\in \mathbb{K}^{n+1}\mid \langle x\mid u\rangle=0\}$ for $\mathbb{K}=\mathbb{R},\mathbb{C},\mathbb{H}$ and $x^{\perp}:=\{u\in \mathbb{O}^{3}_0\mid \langle x\mid u\rangle=0\}$ for $\mathbb{K}=\mathbb{O}$. For all $\mathbb{K}$, $x^{\perp}=\mathbb{K}\times\ldots\times\mathbb{K}\times\{0\}\cong \mathbb{K}^n$. 
The geodesics with $c(0)=[x]$ are given by $c(t)=[\cosh(t)\cdot x+\sinh(t)\cdot u]$, where $u$ is a unit tangent vector i.e. $u\in x^{\perp}$ with $\langle u\mid u\rangle=1$. For all $u\in x^{\perp}\setminus\{0\}$, $\langle u\mid u\rangle> 0$ (positive definiteness) and the real part of $\langle u\mid v\rangle$ is equal to the Riemannian metric for all $u,v\in x^{\perp}$. 

Let $u,v\in x^{\perp}$ be distinct unit tangent vectors and suppose that $\langle u\mid v\rangle\in \mathbb{R}$, then the real span of $u,v,x$ is a 3-dimensional vector subspace $V\subset\mathbb{K}\times\ldots\times\mathbb{K}\times\mathbb{R}$ with the property that for all $x,y\in V$, $x\sim_\mathbb{K} y$ if and only if $x\sim_\mathbb{R}y$. By positive definiteness we may find $\tilde{u},\tilde{v}\in V\cap x^{\perp}$ such that $\langle \tilde{u}\mid \tilde{u}\rangle =1$, $\langle \tilde{v}\mid \tilde{v}\rangle =1$ and $\langle \tilde{u}\mid \tilde{v}\rangle =0$. It follows that there is a linear isometry (i.e. a bijective $\langle\cdot\mid\cdot\rangle$ - preserving map) sending $\mathbb{R}^3$ onto $V$. The image of $V\setminus\{0\}$ in $\mathbb{K}\sP^n$ is isomorphic to $\mathbb{R}\sP^2$ and its image in $\mathbb{K}\sH^n$ is isometric to $\mathbb{R}\sH^2$. 

\subsection{Geodesic Triangles and The Gromov Product}\label{Section:Gromov Product}
Let $(X,d)$ be a metric space. For $o,x,y\in X$ the \textit{Gromov product} of $x$ and $y$ based at $o$ is defined to be
\begin{align*}
(x\mid y)_{o}:=\tfrac{1}{2}(d(x,o)+d(y,o)-d(x,y)).    
\end{align*}
By the triangle inequality, the Gromov product $(x\mid y)_o$ is a non-negative real number for all $x,y,o\in X$.
Suppose that for $x,y,z\in X$, there exist geodesic segments $[x,y],[x,z],[y,z]$, then a \textit{geodesic triangle} of $x,y,z$ is defined to be $\Delta(x,y,z):=[x,y]\cup[x,z]\cup[y,z]$. In a geodesic metric space, the Gromov product can be understood in terms of points in a triangle, see Figure~\ref{Fig:GromovProduct}. All hyperbolic triangles $\Delta(x,y,z)\subset \mathbb{K}\sH^n$ have the property that if $p\in[x,y]$, $q\in[x,z]$ such that $d(x,p)=d(x,q)\leq (y\mid z)_x$, then $d(p,q)\leq 2\ln\varphi$, where $\varphi$ denotes the golden ratio. Inspired by this property, a geodesic metric space $X$ is called \textit{$\gd$-hyperbolic} for some $\gd\geq 0$, if all geodesic triangles $\Delta(x,y,z)\subset X$ have the property that if $p\in[x,y]$, $q\in[x,z]$ such that $d(x,p)=d(x,q)\leq (y\mid z)_x$, then $d(p,q)\leq \gd$, see \cite{buyaloschroeder} for more details on $\gd$-hyperbolic spaces. 
\begin{figure}
    \centering
\includegraphics[width=0.8\linewidth]{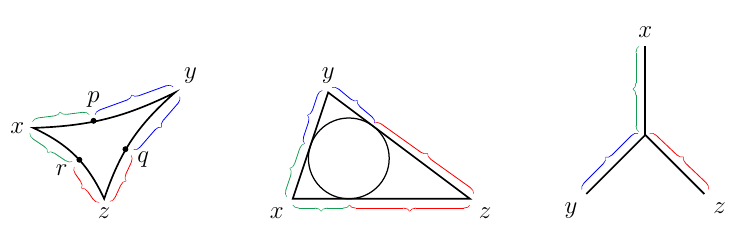}
    \caption{Let $(X,d)$ be a geodesic metric space and consider a geodesic triangle $\Delta(x,y,z)$. There exist three unique points  $p,q,r\in\Delta(x,y,z)$, such that \textcolor{ForestGreen}{$d(x,p)=d(x,r)$}, \textcolor{blue}{$d(y,p)=d(y,q)$} and \textcolor{red}{$d(z,q)=d(z,r)$}. The left picture of this figure depicts an arbitrary geodesic triangle with these three special points marked and their respective distances to the vertices highlighted. These distances are equal to the Gromov products \textcolor{ForestGreen}{$(y\mid z)_x$}, \textcolor{blue}{$(x\mid z)_y$} and \textcolor{red}{$(x\mid y)_z$}.  If $\Delta(x,y,z)$ is a Euclidean triangle, these three special points are exactly the three intersection points of $\Delta(x,y,z)$ with its incircle, this is pictured in the middle. The right picture depicts a tripod. Here, the three special points meet at the centre.}
    \label{Fig:GromovProduct}
\end{figure}

\subsection{Trees} \label{Section:Trees}
A building is a simplicial complex $\Delta$ which can be expressed as a union of Coxeter complexes (called apartments) such that any two simplices $A,B\in\Delta$ are contained in an apartment and for any two apartments containing $A,B\in\Delta$, there exists an isomorphism between them, fixing their intersection pointwise, see \cite{abramenkobrown} for a detailed account of buildings. We always require that our buildings are thick, locally finite simplicial complexes i.e. every codimension 1 simplex in $\Delta$ is the face of $n$ maximal simplices where $3\leq n<\infty$. A building is called Euclidean if its apartments are Euclidean Coxeter complexes.
Every Euclidean building may be realised as a complete metric space of non-positive curvature \cite[Section 11.2.]{abramenkobrown}. The \textit{rank} of a Euclidean building is defined to be the dimension of its maximal flat subspaces (these correspond to apartments). A rank 1 Euclidean building is the metric realisation of a connected acyclic graph whose vertices have finite degree at least 3. We will refer to these spaces simply as \textit{trees}. Note that all trees are $0$-hyperbolic since all their geodesic triangles are tripods.
\subsection{Proper CAT(-1) Spaces}
Let $(X,d)$ be a geodesic metric space and consider a geodesic triangle $\Delta(x,y,z)\subset X$. A geodesic triangle $\Bar{\Delta}(\Bar{x},\Bar{y},\Bar{z})\subset \mathbb{R}\sH^2$ is called a \textit{comparison triangle} for $\Delta(x,y,z)\subset X$ if and only if $d(x,y)=d(\Bar{x},\Bar{y}),$ $d(x,z)=d(\Bar{x},\Bar{z})$ and $d(y,z)=d(\Bar{y},\Bar{z})$.

\begin{defin}
A geodesic metric space $X$ is called \textit{CAT(-1)} if for all geodesic triangles $\Delta(x,y,z)\subset X$ and all $p\in[x,y]\subset\Delta,\,q\in[x,z]\subset\Delta$, 
\begin{align*}
d(p,q)\leq d(\Bar{p},\Bar{q}),    
\end{align*}
 where $\Bar{p}\in[\Bar{x},\Bar{y}]\subset\Bar{\Delta}$ and $\Bar{q}\in[\Bar{x},\Bar{z}]\subset\Bar{\Delta}$ are comparison points (i.e. $d(x,p)=d(\Bar{x},\Bar{p}),$ $d(x,q)=d(\Bar{x},\Bar{q})$) in the comparison triangle $\Bar{\Delta}(\Bar{x},\Bar{y},\Bar{z})\subset \mathbb{R}\sH^2$.    
\end{defin}

The hyperbolic spaces are all CAT(-1) spaces \cite[Theorem II.10.10.]{bridsonhaefliger} as are all trees \cite[II.1.15]{bridsonhaefliger}.
A metric space is called \textit{proper} if every closed and bounded subset is compact. Since trees and hyperbolic spaces are locally compact, complete, geodesic metric spaces, it follows from the Hopf-Rinow theorem \cite[Proposition I.3.7]{bridsonhaefliger} that all trees and hyperbolic spaces are proper.
\subsection{Geodesics in CAT(-1) Spaces} 
Let $X$ be a CAT(-1) space. For all $x,y\in X$, there exists a geodesic segment from $x$ to $y$, which is unique up to parametrisation \cite[Proposition II.1.4]{bridsonhaefliger}.
If $X$ is complete, then for all $\ga\in\partial X$ and all $o\in X$, there exists a unique geodesic ray $c$ such that $c(0)=o,\,c(\infty)=\ga$  \cite[Proposition II.8.2]{bridsonhaefliger}. We denote the image of this geodesic ray $[o,\ga]$. If $X$ is proper, then for any two distinct boundary points $\ga,\gb\in\partial X$, there exists a geodesic line $c$ such that $c(-\infty)=\ga$ and $c(\infty)=\gb$. We denote the image of this geodesic by $[\ga,\gb]$. It follows from the flat strip theorem \cite[Theorem II.2.13]{bridsonhaefliger} that this geodesic is unique up to parametrisation.
\subsection{The Gromov Product and Bourdon Metric on $\partial X$}\label{Section:BourdonMetric}
Let $X$ be a proper CAT(-1) space, then we define the \textit{Gromov product} of $\ga,\gb\in\partial X$ based at $o\in X$ as \begin{align*}
(\ga\mid\gb)_{o}:=\lim_{t\rightarrow\infty}(x(t)\mid y(t))_{o}
\end{align*}
where $x\in\ga$, $y\in\gb$ \cite[Proposition 3.4.2]{buyaloschroeder}.

Define the \textit{horospherical distance} between $x,y\in X$ relative to $\ga\in\partial X$ as
\begin{align*}
B_\ga(x,y):=\lim_{t\rightarrow\infty}\left(d(x,r(t))-d(y,r(t))\right),
\end{align*}
where $r\in\ga$. 

We can understand a base change of the Gromov product in terms of the horospherical distance \cite[2.4.2]{bourdon95} as follows   
\begin{align*}(\ga\mid\gb)_y=(\ga\mid\gb)_x-\tfrac{1}{2}\left(B_{\ga}(x,y)+B_{\gb}(x,y)\right), \qquad \forall x,y\in X,\,\ga,\gb\in\partial X. \end{align*} 
If $X$ is the real hyperbolic plane then the Gromov product may be understood in terms of angles. 
Let $o\in\mathbb{R}\sH^2$ and $\ga,\gb\in\partial \mathbb{R}\sH^2$, and let $\theta_o(\ga,\gb)$ denote the angle between the geodesic rays $[o,\ga]$ and $[o,\gb]$, then
\begin{align}\label{Bourdonangle}
e^{-(\ga\mid\gb)_o}=\sin{\dfrac{\theta_o(\ga,\gb)}{2}}. 
\end{align}
This property follows directly from the hyperbolic cosine rule \cite[Section 2.4.3.]{buyaloschroeder}.
Using the Gromov product, Bourdon defines \begin{align*}
d_o(\ga,\gb):=\begin{cases}
 e^{-(\ga\mid\gb)_o}&\text{ if }\ga\neq\gb\\
 0&\text{ if }\ga=\gb
\end{cases}
\end{align*}
and uses (\ref{Bourdonangle}) to show that this gives rise to a metric on $\partial X$ \cite[Theorem 2.5.1]{bourdon95}. 
The Bourdon metric on $\partial X$ takes values in $[0,1]$ where $d_o(\ga,\gb)=1$ if and only if $o\in[\ga,\gb]$. 

\section{The Cross Ratio on the Boundary of a Proper CAT(-1) Space}\label{Section:ProperCAT(-1)}
In this section $X$ denotes a proper CAT(-1) space and $\partial X$ its visual boundary. Suppose throughout that $\abs{\partial X}\geq 3$.  
\begin{defin}\label{Def:SpecialPoint}\cite{bourdon96}
Let $\ga,\gb,\gc\in\partial X$ be three distinct boundary points. The point $p(\ga,\gb;\gc)\in X$ is defined to be the unique point $p\in[\ga,\gb]$ such that $d_p(\ga,\gc)=d_p(\gb,\gc)$.
\end{defin}

The existence and uniqueness of $p(\ga,\gb;\gc)$ can easily be verified using the relationship between the Gromov product and the horospherical distance outlined in Section~\ref{Section:BourdonMetric}.
It also follows from this relationship that the cross ratio, defined below, does not depend on the choice of basepoint $o\in X$.

\begin{figure}
    \centering
\includegraphics[width=0.6\linewidth]{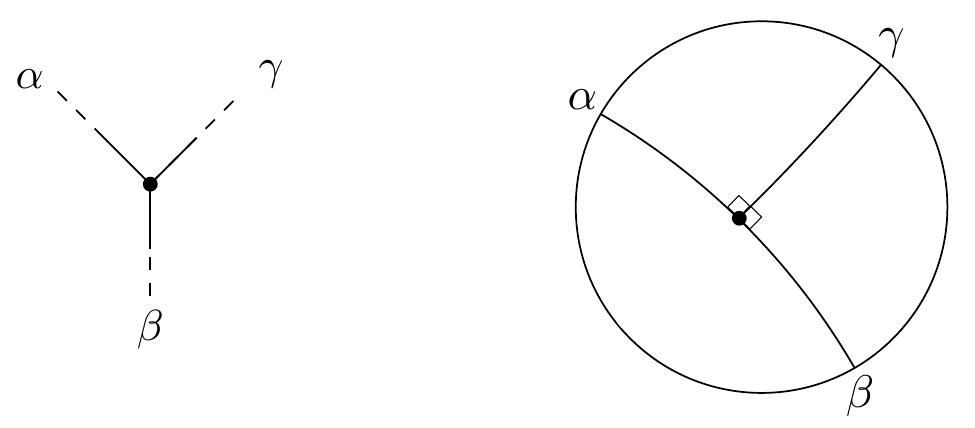}
    \caption{If $X$ is a tree, then the point $p(\ga,\gb;\gc)$ is the centre of the tripod $\Delta(\ga,\gb,\gc)$.  
To see this, note that for $p\in[\ga,\gb]\cap[\ga,\gc]\cap[\gb,\gc]$, we have $d_p(\ga,\gb)=d_p(\ga,\gc)=d_p(\gb,\gc)=1$. 
If $X$ is the real hyperbolic plane, then it follows from (\ref{Bourdonangle}) that $p(\ga,\gb;\gc)$ is the unique point $p\in[\ga,\gb]$, such that $[p,\gc]$ is perpendicular to $[\ga,\gb]$.}
\label{fig:specialpoint}
\end{figure}

\begin{defin}\label{Def:crossratio}\cite{bourdon96}
The cross ratio of pairwise distinct $\ga,\gb,\gc,\gd\in\partial X$ is defined as follows
\begin{align*}
\omega(\ga,\gb;\gc,\gd):=\frac{d_o(\ga,\gc)d_o(\gb,\gd)}{d_o(\ga,\gd)d_o(\gb,\gc)},
\end{align*}
where $o\in X$ is any basepoint.
\end{defin}

\begin{prop}\cite[Proposition 1.3]{bourdon96}\label{Prop:crossratiometric}
Let $\ga,\gb,\gc,\gd\in\partial X$ be pairwise distinct. Then,
\begin{align*}
\ln{\omega(\ga,\gb;\gc,\gd)}=
\begin{cases}
-d(p(\ga,\gb;\gc),p(\ga,\gb;\gd))&\text{ if }p(\ga,\gb;\gc)\in[p(\ga,\gb;\gd),\ga]\\
d(p(\ga,\gb;\gc),p(\ga,\gb;\gd))&\text{ if }p(\ga,\gb;\gc)\in[p(\ga,\gb;\gd),\gb].
\end{cases}
\end{align*}
\end{prop}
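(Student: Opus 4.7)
My plan is to expand the definition of the cross ratio, exploit the basepoint independence to choose $o=p(\ga,\gb;\gc)$, and then apply the base-change formula together with an explicit horofunction computation on the geodesic line $[\ga,\gb]$.

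Since $d_o(\xi,\eta)=e^{-(\xi\mid\eta)_o}$ for distinct $\xi,\eta\in\partial X$, taking logarithms in Definition~\ref{Def:crossratio} gives
\[
\ln\omega(\ga,\gb;\gc,\gd) = (\ga\mid\gd)_o + (\gb\mid\gc)_o - (\ga\mid\gc)_o - (\gb\mid\gd)_o.
\]
Because $\omega$ is basepoint-independent, I would set $o:=p(\ga,\gb;\gc)$; by Definition~\ref{Def:SpecialPoint} this forces $d_o(\ga,\gc)=d_o(\gb,\gc)$, equivalently $(\ga\mid\gc)_o=(\gb\mid\gc)_o$, so those two terms cancel. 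Writing $p_\gc:=p(\ga,\gb;\gc)$ and $p_\gd:=p(\ga,\gb;\gd)$, we are left with
\[
\ln\omega(\ga,\gb;\gc,\gd) = (\ga\mid\gd)_{p_\gc} - (\gb\mid\gd)_{p_\gc}.
\]

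Next, I would apply the base-change identity from Section~\ref{Section:BourdonMetric} with $x=p_\gd$ and $y=p_\gc$ to both $(\ga\mid\gd)$ and $(\gb\mid\gd)$. The two $B_\gd(p_\gd,p_\gc)$ contributions cancel upon subtracting, and $(\ga\mid\gd)_{p_\gd}-(\gb\mid\gd)_{p_\gd}=0$ by the defining property of $p_\gd$, leaving
\[
\ln\omega(\ga,\gb;\gc,\gd) = -\tfrac{1}{2}\bigl(B_\ga(p_\gd,p_\gc) - B_\gb(p_\gd,p_\gc)\bigr).
\]
This reduces the claim to a horofunction computation along a single geodesic line $[\ga,\gb]$, on which both $p_\gc$ and $p_\gd$ lie.

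For the final step, I would parametrise $[\ga,\gb]$ by a geodesic line $c:\mathbb{R}\rightarrow X$ with $c(-\infty)=\ga$ and $c(+\infty)=\gb$, and write $p_\gc=c(s)$, $p_\gd=c(t)$. Evaluating the limits defining $B_\ga$ and $B_\gb$ along rays toward $\ga$ and $\gb$ that eventually coincide with $c$ (so $r(T)\approx c(-T)$ and $r(T)\approx c(T)$ respectively) yields $B_\ga(c(t),c(s))=t-s$ and $B_\gb(c(t),c(s))=s-t$. Substituting back gives $\ln\omega(\ga,\gb;\gc,\gd)=s-t$. Since $p_\gc\in[p_\gd,\ga]$ exactly when $s<t$, and $p_\gc\in[p_\gd,\gb]$ exactly when $s>t$, and in either case $|s-t|=d(p_\gc,p_\gd)$, the two cases in the statement follow. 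The only real obstacle I expect is keeping the sign conventions straight for the horofunctions on the oriented geodesic; once the basepoint has been moved onto $[\ga,\gb]$ everything else is routine bookkeeping.
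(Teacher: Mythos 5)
Your argument is correct. Note that the paper does not prove this proposition at all --- it is imported by citation from Bourdon --- so there is no in-paper proof to compare against; your chain (expand $\ln\omega$ in Gromov products, move the basepoint to $p(\ga,\gb;\gc)$ to kill two terms, apply the base-change identity at $p(\ga,\gb;\gd)$ to kill two more, then evaluate the Busemann functions $B_\ga$, $B_\gb$ explicitly along the oriented line $[\ga,\gb]$ to get $s-t$) is exactly the standard derivation underlying Bourdon's Proposition~1.3, and the sign bookkeeping in your final step matches the two cases of the statement (with the boundary case $p(\ga,\gb;\gc)=p(\ga,\gb;\gd)$ giving $0$ consistently in both).
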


\begin{defin}\label{Def:parametrisedgeodesicnotation}
Let $\ga,\gb,\gc\in\partial X$ be pairwise distinct. Denote by $[\ga,\gb]_{\gc}:\mathbb{R}\rightarrow X$ the parametrisation of $[\ga,\gb]$ satisfying
\begin{align*}
[\ga,\gb]_{\gc}(-\infty)=\ga,\qquad 
[\ga,\gb]_{\gc}(0)=p(\ga,\gb;\gc), \qquad
[\ga,\gb]_{\gc}(\infty)=\gb.
\end{align*}
\end{defin}
\begin{lem}\label{Lem:parametriseddistance}
Let $\ga,\gb,\gc,\gd\in\partial X$ be pairwise distinct. Given two different parametrisations of $[\ga,\gb]$, one can use the cross ratio to determine whether they are describing the same point:
\begin{align*}
[\ga,\gb]_{\gc}(t)=[\ga,\gb]_{\gd}(s)\iff \ln \omega(\ga,\gb;\gc,\gd)=s-t
\end{align*}
\end{lem}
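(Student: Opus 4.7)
The proof will essentially reduce to Proposition~\ref{Prop:crossratiometric}, once we notice that both $[\ga,\gb]_\gc$ and $[\ga,\gb]_\gd$ are unit-speed parametrisations of the same geodesic line $[\ga,\gb]$, oriented from $\ga$ at $-\infty$ to $\gb$ at $+\infty$. Since the two parametrisations differ only in the choice of the point mapped to $0$, they differ by a translation in $\mathbb{R}$. My plan is therefore to identify this translation in terms of the two basepoints $p_\gc:=p(\ga,\gb;\gc)$ and $p_\gd:=p(\ga,\gb;\gd)$, and then convert the signed distance between these points into $\ln\omega(\ga,\gb;\gc,\gd)$ via the proposition.

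Concretely, I would let $t_0\in\mathbb{R}$ be the unique real number with $[\ga,\gb]_\gc(t_0)=p_\gd=[\ga,\gb]_\gd(0)$. Uniqueness of the geodesic $[\ga,\gb]$ (up to parametrisation) together with the orientation condition $-\infty\mapsto\ga$, $+\infty\mapsto\gb$ forces
\begin{align*}
[\ga,\gb]_\gc(u)=[\ga,\gb]_\gd(u-t_0)\qquad\text{for every }u\in\mathbb{R}.
\end{align*}
In particular $[\ga,\gb]_\gc(t)=[\ga,\gb]_\gd(s)$ holds if and only if $s=t-t_0$, i.e.\ if and only if $s-t=-t_0$.

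It remains to show that $-t_0=\ln\omega(\ga,\gb;\gc,\gd)$. This is a short case analysis using Proposition~\ref{Prop:crossratiometric}. If $p_\gd$ lies on the $\gb$-side of $p_\gc$ on $[\ga,\gb]$, then $t_0=d(p_\gc,p_\gd)>0$; on the other hand $p_\gc\in[p_\gd,\ga]$, so the proposition gives $\ln\omega(\ga,\gb;\gc,\gd)=-d(p_\gc,p_\gd)=-t_0$. If instead $p_\gd$ lies on the $\ga$-side of $p_\gc$, then $t_0=-d(p_\gc,p_\gd)<0$ and $p_\gc\in[p_\gd,\gb]$, so the proposition gives $\ln\omega(\ga,\gb;\gc,\gd)=d(p_\gc,p_\gd)=-t_0$. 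In the degenerate case $p_\gc=p_\gd$ both sides are $0$. In every case, $-t_0=\ln\omega(\ga,\gb;\gc,\gd)$, and combining this with the previous paragraph yields the biconditional.

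No serious obstacle is expected; the only thing one must be careful with is the sign bookkeeping in Proposition~\ref{Prop:crossratiometric}, together with the orientation convention $[\ga,\gb]_\bullet(-\infty)=\ga$ of Definition~\ref{Def:parametrisedgeodesicnotation}, which flips a sign relative to the "positive" direction along $[\ga,\gb]$.
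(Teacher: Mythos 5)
Your proof is correct and follows essentially the same route as the paper: both arguments reduce the claim to Proposition~\ref{Prop:crossratiometric} by observing that $[\ga,\gb]_{\gc}$ and $[\ga,\gb]_{\gd}$ are equally oriented unit-speed parametrisations of the same line, hence differ by the translation given by the signed distance between $p(\ga,\gb;\gc)$ and $p(\ga,\gb;\gd)$. The only difference is presentational: the paper disposes of the sign issue with a ``without loss of generality'' on the sign of $\ln\omega(\ga,\gb;\gc,\gd)$, whereas you carry out the explicit case analysis on the relative position of the two basepoints.
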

\begin{proof}
Without loss of generality, suppose that $\ln\omega(\ga,\gb;\gc,\gd)\geq 0$, then by Proposition~\ref{Prop:crossratiometric} we know that
$\ln\omega(\ga,\gb;\gc,\gd)=d(p(\ga,\gb;\gc),p(\ga,\gb;\gd))$
and $p(\ga,\gb;\gc)\in[p(\ga,\gb;\gd),\gb]$. It follows that
$[\ga,\gb]_{\gc}(t)=[\ga,\gb]_{\gd}(t+\ln\omega(\ga,\gb;\gc,\gd))$.
\end{proof}
Both Bourdon and Tits used boundary points in order to model the geodesics of their spaces.
Below, we define $\chi_{\ga\gb}$ for $\ga,\gb\in\partial X$ and prove that this models the geodesic $[\ga,\gb]\subset X$.
\begin{prop}\label{Prop:geodesicmetric}
For distinct $\ga,\gb\in\partial X$, define
\begin{align*}
\chi_{\ga\gb}:=\{(\ga,\gb,\gc,t)\mid \gc\in\partial X\setminus\{\ga,\gb\}, t\in\mathbb{R}\}\mathord/\sim
\end{align*}
where $(\ga,\gb,\gc,t)\sim(\ga,\gb,\gd,s)$ if and only if $\ln\omega(\ga,\gb;\gc,\gd)=s-t$. Then,
\begin{align*}
d_{\omega}:\chi_{\ga\gb}\times\chi_{\ga\gb}\rightarrow\mathbb{R}\qquad
d_{\omega}((\ga,\gb,\gc,t),(\ga,\gb,\gc,s)):=\abs{s-t}
\end{align*}
is a well-defined metric on $\chi_{\ga\gb}$ and
\begin{align*}
\varphi_{\ga\gb}:\chi_{\ga\gb}\rightarrow [\ga,\gb]\qquad \varphi:(\ga,\gb,\gc,t)\mapsto [\ga,\gb]_{\gc}(t)
\end{align*}
is an isometry.
\end{prop}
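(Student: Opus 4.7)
The plan is to first verify that $\sim$ is an equivalence relation, then observe that each equivalence class admits a unique representative with any prescribed third coordinate, and finally deduce both the metric property and the isometry from Lemma~\ref{Lem:parametriseddistance}. The whole argument is essentially bookkeeping, driven by the multiplicativity (cocycle identity) of $\omega$ in its last two arguments.

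For the equivalence relation, the three required properties follow immediately from Definition~\ref{Def:crossratio}: reflexivity from $\omega(\ga,\gb;\gc,\gc)=1$, symmetry from $\omega(\ga,\gb;\gd,\gc)=\omega(\ga,\gb;\gc,\gd)^{-1}$, and transitivity from the cocycle identity $\omega(\ga,\gb;\gc,\gd)\m\omega(\ga,\gb;\gd,\epsilon)=\omega(\ga,\gb;\gc,\epsilon)$. Taking logarithms translates each of these into the corresponding property of $\sim$.

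To establish well-definedness of $d_\omega$, the key observation is that for any class $[(\ga,\gb,\gc,t)]$ and any $\gd\in\partial X\setminus\{\ga,\gb\}$, the unique representative of the form $(\ga,\gb,\gd,\cdot)$ is $(\ga,\gb,\gd,\,t+\ln\omega(\ga,\gb;\gc,\gd))$. Hence any two classes in $\chi_{\ga\gb}$ admit representatives sharing a common third coordinate, and the formula $\abs{s-t}$ yields a candidate value; changing the common coordinate from $\gc$ to $\gd$ shifts both $s$ and $t$ by the same amount $\ln\omega(\ga,\gb;\gc,\gd)$, leaving $s-t$ invariant. The metric axioms then follow at once, because for any fixed $\gc$ the rule $[(\ga,\gb,\gc,t)]\mapsto t$ is a bijection $\chi_{\ga\gb}\to\mathbb{R}$ under which $d_\omega$ pulls back the Euclidean metric on $\mathbb{R}$.

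For the isometry, well-definedness of $\varphi_{\ga\gb}$ on equivalence classes is exactly the reverse direction of Lemma~\ref{Lem:parametriseddistance}, and the forward direction supplies injectivity. Surjectivity is immediate, since for any fixed $\gc$ the parametrisation $[\ga,\gb]_\gc\colon\mathbb{R}\to[\ga,\gb]$ is already onto. Finally, since $[\ga,\gb]_\gc$ has unit speed, $d([\ga,\gb]_\gc(s),[\ga,\gb]_\gc(t))=\abs{s-t}=d_\omega((\ga,\gb,\gc,s),(\ga,\gb,\gc,t))$, so $\varphi_{\ga\gb}$ preserves distances. There is no single difficult step; the main point requiring care is to invoke the cocycle identity correctly when switching the third coordinate, which has already been absorbed into Lemma~\ref{Lem:parametriseddistance} via the sign-sensitive cases of Proposition~\ref{Prop:crossratiometric}.
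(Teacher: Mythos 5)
Your proposal is correct and follows essentially the same route as the paper: both arguments fix a third coordinate $\gc$, use Lemma~\ref{Lem:parametriseddistance} to identify $\chi_{\ga\gb}$ with $\mathbb{R}$ (equivalently, to see that $\varphi_{\ga\gb}$ is a well-defined bijection), and then transfer the metric via the unit-speed parametrisation $[\ga,\gb]_{\gc}$. The only difference is that you spell out the verification that $\sim$ is an equivalence relation via the cocycle identity for $\omega$, a step the paper leaves implicit; this is a harmless (and arguably welcome) addition rather than a different argument.
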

\begin{proof}
 Fix pairwise distinct $\ga,\gb,\gc\in\partial X$. Every point of $[\ga,\gb]$ can be expressed as $[\ga,\gb]_{\gc}(t)$ for some $t\in\mathbb{R}$. It follows from Lemma~\ref{Lem:parametriseddistance} that $\varphi_{\ga\gb}$ is a bijection and therefore every point of $\chi_{\ga\gb}$ can be expressed as $(\ga,\gb,\gc,t)$ for some $t\in\mathbb{R}$ and $d_{\omega}$ is well-defined. Since $[\ga,\gb]_{\gc}$ is a geodesic we know that
\begin{align*}
d([\ga,\gb]_{\gc}(s),[\ga,\gb]_{\gc}(t))=\abs{s-t}
\end{align*} and 
therefore
\begin{align*}
d_{\omega}((\ga,\gb,\gc,s),(\ga,\gb,\gc,t))=d(\varphi_{\ga\gb}(\ga,\gb,\gc,s),\varphi_{\ga\gb}(\ga,\gb,\gc,t))
\end{align*}
which shows that $d_{\omega}$ is a metric and $\varphi_{\ga\gb}$ is an isometry.
\end{proof}
Foertsch and Schroeder study CAT(-1) spaces and Gromov hyperbolic spaces in \cite{foertschschroeder}, proving the `Ptolemy inequality' for complete CAT(-1) spaces, in terms of the Bourdon metric on the Gromov boundary. We present this theorem for proper CAT(-1) spaces, in terms of the cross ratio on the visual boundary.
\begin{thm}\label{Thm:hyperbolicintersects}\cite[Theorem 1.1]{foertschschroeder}
Let $X$ be a proper CAT(-1) space. For any pairwise distinct $\ga,\gb,\gc,\gd\in\partial X$,
\begin{align*}
\omega(\ga,\gd;\gc,\gb)+\omega(\ga,\gc;\gd,\gb)\geq 1.  
\end{align*}
Equality holds if and only if $[\ga,\gb]$ and $[\gc,\gd]$ intersect and the convex hull of $\ga,\gb,\gc,\gd$ is isometric to an ideal quadrilaterial in $\mathbb{R}\sH^2$.
\end{thm}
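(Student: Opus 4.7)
The first step is to unpack the inequality. Expanding the two cross ratios with respect to a basepoint $o$ gives them the common denominator $d_o(\ga,\gb)d_o(\gc,\gd)$, so the claim reduces to the Ptolemy-type inequality
\[ d_o(\ga,\gb)\m d_o(\gc,\gd)\,\leq\,d_o(\ga,\gc)\m d_o(\gb,\gd) + d_o(\ga,\gd)\m d_o(\gb,\gc) \]
for the Bourdon metric on $\partial X$. The theorem therefore asserts that $(\partial X,d_o)$ is a Ptolemy metric space, and that equality captures exactly the ideal quadrilaterals in $\mathbb{R}\sH^2$.

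My plan is to obtain this as a limit of a Ptolemy inequality for four finite points in $X$, namely
\[ \sinh\tfrac{d(A,C)}{2}\sinh\tfrac{d(B,D)}{2}\,\leq\,\sinh\tfrac{d(A,B)}{2}\sinh\tfrac{d(C,D)}{2}+\sinh\tfrac{d(A,D)}{2}\sinh\tfrac{d(B,C)}{2}, \]
with equality precisely when $A,B,C,D$ lie on a common geodesic or on a metric circle inside an isometrically embedded copy of $\mathbb{R}\sH^2$. This finite-point statement is the main technical input and is where I expect the principal obstacle. It is classical for $\mathbb{R}\sH^2$ itself (hyperbolic Ptolemy). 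To upgrade it to CAT(-1), I would fix a diagonal, say $[A,C]$, and apply the CAT(-1) comparison to the triangles $\Delta(A,B,C)$ and $\Delta(A,C,D)$: their comparison triangles in $\mathbb{R}\sH^2$ can be glued along $[A,C]$ with $B$ and $D$ placed on opposite sides, yielding a planar configuration whose six pairwise distances dominate those of $A,B,C,D\in X$ in the directions needed. Hyperbolic Ptolemy applied in this plane then implies the inequality in $X$.

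To pass to the boundary, I choose geodesic rays from $o$ to each of $\ga,\gb,\gc,\gd$ and sequences $A_n,B_n,C_n,D_n$ tending to infinity along them. Using $\sinh(t/2)\sim\tfrac{1}{2}e^{t/2}$ as $t\to\infty$ together with
\[ \tfrac{1}{2}d(X_n,Y_n)\,=\,\tfrac{1}{2}(d(o,X_n)+d(o,Y_n))-(X_n\mid Y_n)_o, \]
the dominant factor $\tfrac{1}{4}e^{\tfrac{1}{2}(d(o,A_n)+d(o,B_n)+d(o,C_n)+d(o,D_n))}$ appears identically on every term of the finite Ptolemy inequality and hence cancels, while each surviving factor $e^{-(X_n\mid Y_n)_o}$ converges to the corresponding Bourdon distance $d_o(\xi,\eta)$. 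Taking $n\to\infty$ gives the Bourdon-metric Ptolemy inequality above.

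For the equality case: equality at the boundary forces asymptotic equality in the finite Ptolemy inequality along every choice of approximating sequences, and the equality clause of the finite inequality then locks the four rays into an isometrically embedded copy of $\mathbb{R}\sH^2$. The limiting configuration is an ideal quadrilateral there, and its diagonals $[\ga,\gb]$ and $[\gc,\gd]$ must meet, since the cyclic order forced by the equality case is $\ga,\gc,\gb,\gd$, for which ideal diagonals in $\mathbb{R}\sH^2$ intersect. Conversely, in an ideal quadrilateral in $\mathbb{R}\sH^2$ equality is a direct computation using the angle formula (\ref{Bourdonangle}) and the classical planar Ptolemy theorem.
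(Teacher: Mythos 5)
First, a point of reference: the paper does not prove this statement at all --- it is quoted from Foertsch and Schroeder \cite{foertschschroeder}, so there is no internal proof to compare yours against; your attempt has to be judged on its own. Your reduction of the stated inequality to the Ptolemy inequality $d_o(\ga,\gb)d_o(\gc,\gd)\le d_o(\ga,\gc)d_o(\gb,\gd)+d_o(\ga,\gd)d_o(\gb,\gc)$ for the Bourdon metric is correct, and the limiting step is sound: the common factor $\tfrac14 e^{\frac12(d(o,A_n)+d(o,B_n)+d(o,C_n)+d(o,D_n))}$ does cancel, the correction factors $1-e^{-d(X_n,Y_n)}$ tend to $1$, and $(X_n\mid Y_n)_o\to(\xi\mid\eta)_o$ by the cited convergence of Gromov products. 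The two places where the real content sits, however, both have genuine holes.

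The finite-point $\sinh(d/2)$-Ptolemy inequality is not established by your gluing argument. The claim that the configuration obtained by gluing the comparison triangles of $\Delta(A,B,C)$ and $\Delta(A,C,D)$ along $[\bar A,\bar C]$ dominates the original one is false in general: the comparison $d(B,D)\le d(B,m)+d(m,D)\le d(\bar B,\bar m)+d(\bar m,\bar D)=d(\bar B,\bar D)$ requires $[\bar B,\bar D]$ to actually cross $[\bar A,\bar C]$, i.e.\ the glued quadrilateral to be convex at $\bar A$ and $\bar C$. A concrete failure: in a tree put $B,D$ at distance $L$ from a branch point on two branches and $A,C$ at distances $\epsilon<2\epsilon$ on a third branch; then both comparison triangles are degenerate, the glued configuration collapses to $\bar B=\bar D$, and $d(\bar B,\bar D)=0<2L=d(B,D)$. (The finite inequality itself still holds there, but your comparison does not prove it.) Handling the non-convex case via Alexandrov's lemma increases \emph{both} diagonals, so the one-line deduction no longer closes; this is precisely why the published proofs work directly with the boundary quantities rather than through a finite $\sinh$-Ptolemy comparison. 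You would need either to prove the finite statement properly (it is Valentine's theorem in $\mathbb{R}\sH^2$, but its CAT($-1$) extension is a theorem, not an exercise) or to choose a different route.

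The equality case is also not proved. Equality at infinity only gives you an asymptotically vanishing defect in the finite inequality along the sequences $A_n,B_n,C_n,D_n$; it never gives exact equality at any finite stage, so you cannot invoke the equality clause of the finite statement to ``lock'' the configuration into an embedded $\mathbb{R}\sH^2$. Converting an asymptotic equality into the existence of an isometrically embedded ideal quadrilateral (and the intersection of $[\ga,\gb]$ with $[\gc,\gd]$) is a separate rigidity argument, analogous to the flat-triangle lemma, and it is where most of the work in the equality statement lies. Only the converse direction --- verifying equality for an ideal quadrilateral in $\mathbb{R}\sH^2$ via (\ref{Bourdonangle}) or the projective cross ratio --- is the routine computation you describe.
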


It follows from the above theroem, that we can use the cross ratio in order to characterise subspaces of the hyperbolic spaces which are isometric to the real hyperbolic plane. We also have a characterisation of all intersecting geodesics in the real hyperbolic plane in terms of the cross ratio. In the next section, we will build on this, giving a characterisation of all intersecting geodesics in all trees and hyperbolic spaces. In order to do this, we make use of a property of intersecting geodesics in the hyperbolic spaces, outlined by Bourdon in \cite{bourdon96} in his proof of Theorem 0.1. 
\section{Uniform Reconstruction of $X$}\label{Section:Reconstruction}
In this section $X$ denotes either a hyperbolic space or a tree (see Sections~\ref{Section:HyperbolicSpaces} and \ref{Section:Trees}). We use the cross ratio in order to understand how and when two geodesics in $X$ intersect. Finally, we use this to construct a metric space $\Omega(X)$ which is isometric to $X$.

It follows from Theorem~\ref{Thm:hyperbolicintersects} that for a real hyperbolic space, the geodesics $[\ga,\gb]$ and $[\gc,\gd]$ intersect if and only if $\omega(\ga,\gc;\gd,\gb)+\omega(\ga,\gd;\gc,\gb)=1$. The first step is to find an analogue of this property for trees.

\begin{figure}
    \centering
    \includegraphics[width=0.9\linewidth]{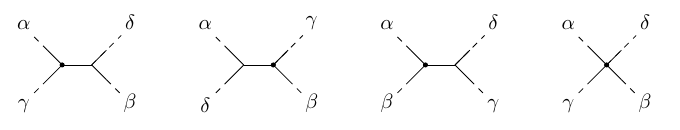}
    \caption{Let $X$ be a tree, then for distinct  $\ga,\gb,\gc,\gd\in\partial X$, there are only four possibilities for how the six geodesics between them  intersect. In each case, the point $p(\ga,\gb;\gc)$ is marked.}
    \label{fig:trees}
\end{figure}

\begin{lem}\label{Lem:fourcases}
Let $X$ be a tree. Given distinct $\ga,\gb,\gc,\gd\in\partial X$, there are only four possible cases for the values of $\omega(\ga,\gb;\gc,\gd)$, $\omega(\ga,\gc;\gd,\gb)$  and $\omega(\ga,\gd;\gc,\gb)$:
\begin{enumerate}
  \item $\omega(\ga,\gb;\gc,\gd)<1,\quad
\omega(\ga,\gc;\gd,\gb)=1,\quad\omega(\ga,\gd;\gc,\gb)< 1,$
  \item $\omega(\ga,\gb;\gc,\gd)>1,\quad\omega(\ga,\gc;\gd,\gb)<1,\quad\omega(\ga,\gd;\gc,\gb)=1,$
     \item $\omega(\ga,\gb;\gc,\gd)=1,\quad\omega(\ga,\gc;\gd,\gb)>1,\quad \omega(\ga,\gd;\gc,\gb)>1,$
    \item $\omega(\ga,\gb;\gc,\gd)=\omega(\ga,\gc;\gd,\gb)=\omega(\ga,\gd;\gc,\gb)=1.$
\end{enumerate}
\end{lem}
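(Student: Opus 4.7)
The plan is to classify the combinatorial shape of the convex hull of $\{\ga,\gb,\gc,\gd\}$ inside the tree $X$ and then read off each cross ratio using Proposition~\ref{Prop:crossratiometric}. The key ingredient will be the fact noted in the caption of Figure~\ref{fig:specialpoint}: in a tree, the special point $p(\ga,\gb;\gc)$ is precisely the centre of the ideal tripod $\Delta(\ga,\gb,\gc)$, i.e.\ the unique branch point lying on $[\ga,\gb]$ at which the ray towards $\gc$ diverges.

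Since $X$ is a tree, the convex hull of four distinct boundary points is a subtree with four infinite ends and at most two interior branch points. There are therefore only two combinatorial possibilities: an \emph{X-configuration}, in which all four rays emanate from a single branch point $p$, or an \emph{H-configuration}, in which two distinct branch points $p\neq q$ are joined by a finite segment, with two of the four rays emanating from each. I would treat these two cases in turn.

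In the X-configuration, every tripod centre $p(\cdot,\cdot;\cdot)$ built from three of the four points coincides with $p$, so Proposition~\ref{Prop:crossratiometric} forces each of the three cross-ratio logarithms to vanish, producing case~(4). In the H-configuration, the two branch points induce a partition of $\{\ga,\gb,\gc,\gd\}$ into a \emph{$p$-pair} and a \emph{$q$-pair}; this is one of the three possible 2-2 partitions. I would verify one of them in detail and invoke symmetry for the remaining two. For the partition $\{\ga,\gc\}\sqcup\{\gb,\gd\}$ (with $\ga,\gc$ on the $p$-side), one reads off $p(\ga,\gb;\gc)=p$ and $p(\ga,\gb;\gd)=q$ on $[\ga,\gb]$; since $\ga$ lies on the $p$-side we have $p\in[q,\ga]$, so Proposition~\ref{Prop:crossratiometric} gives $\ln\omega(\ga,\gb;\gc,\gd)=-d(p,q)<0$. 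On $[\ga,\gc]$ both tripod centres $p(\ga,\gc;\gd)$ and $p(\ga,\gc;\gb)$ collapse to $p$, giving $\omega(\ga,\gc;\gd,\gb)=1$; on $[\ga,\gd]$ an analogous computation yields $\omega(\ga,\gd;\gc,\gb)<1$. This is case~(1), and the two other H-partitions $\{\ga,\gd\}\sqcup\{\gb,\gc\}$ and $\{\ga,\gb\}\sqcup\{\gc,\gd\}$ yield cases~(2) and~(3) by the same method.

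The main obstacle is purely bookkeeping --- keeping track of which branch point is the correct tripod centre in each sub-case, and of the sign coming from Proposition~\ref{Prop:crossratiometric}. This reduces to a single simple observation: on a geodesic $[\alpha,\beta]$ the tripod centre $p(\alpha,\beta;\gamma)$ sits at $p$ or $q$ according to which side of the central segment $[p,q]$ contains the ray to $\gamma$, and the sign in Proposition~\ref{Prop:crossratiometric} is determined by which of $p,q$ is closer to $\alpha$ along $[\alpha,\beta]$.
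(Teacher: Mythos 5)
Your proof is correct, but it takes a genuinely different route from the paper. The paper never touches the geometry of the tree directly: it invokes the $0$-hyperbolicity of $X$ via the Buyalo--Schroeder tripod inequality (``the two smallest of the three sums $(\ga\mid\gb)_o+(\gc\mid\gd)_o$, $(\ga\mid\gc)_o+(\gb\mid\gd)_o$, $(\ga\mid\gd)_o+(\gb\mid\gc)_o$ are equal''), rewrites this as ``the two largest of the three products $d_o(\ga,\gb)d_o(\gc,\gd)$, etc.\ are equal'', and reads the four cases straight off the definition of $\omega$ as a ratio of such products. You instead classify the union of the six geodesics as an X-configuration or one of three H-configurations and compute each cross ratio from the tripod centres via Proposition~\ref{Prop:crossratiometric}; I checked your sample computation for the partition $\{\ga,\gc\}\sqcup\{\gb,\gd\}$ and the sign conventions all come out right, and the remaining partitions do yield cases (2) and (3) as you claim. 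The trade-off is this: the paper's argument is shorter and needs no case analysis of tree shapes, but it then has to do the geometric work separately in Lemma~\ref{Lem:treeintersections} to match the four algebraic cases with Figure~\ref{fig:trees}; your argument front-loads exactly that geometric classification, so it effectively proves Lemma~\ref{Lem:fourcases} and Lemma~\ref{Lem:treeintersections} simultaneously. The only soft spot is that you assert, rather than prove, that the convex hull of four distinct ends has at most two branch points (equivalently, that the only configurations are X and H); this is standard and easy to justify by locating $p(\ga,\gb;\gc)$ and $p(\ga,\gb;\gd)$ on $[\ga,\gb]$ and splitting into the cases where they are distinct or coincide, but a careful write-up should include that sentence.
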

\begin{proof}
$X$ is a $0$-hyperbolic space and therefore 
 by \cite[Lemmas 2.1.4. and 2.2.2.]{buyaloschroeder}, for any $o\in X$ and any distinct $\ga,\gb,\gc,\gd\in\partial X$, the two smallest entries of the following triple are equal:
\begin{align*}
\left((\ga\mid\gb)_o+(\gc\mid\gd)_o,(\ga\mid\gc)_o+(\gb\mid\gd)_o,(\ga\mid\gd)_o+(\gb\mid\gc)_o\right)
\end{align*}
Equivalently, the two largest entries of 
\begin{align*}
(d_o(\ga,\gb)d_o(\gc,\gd),d_o(\ga,\gc)d_o(\gb,\gd),d_o(\ga,\gd)d_o(\gb,\gc))
\end{align*}
are equal.
There are therefore four cases:
\begin{enumerate}
\item $d_o(\ga,\gb)d_o(\gc,\gd)=d_o(\ga,\gd)d_o(\gb,\gc)>d_o(\ga,\gc)d_o(\gb,\gd)$
\item $d_o(\ga,\gb)d_o(\gc,\gd)=d_o(\ga,\gc)d_o(\gb,\gd)>d_o(\ga,\gd)d_o(\gb,\gc)$
\item $d_o(\ga,\gd)d_o(\gb,\gc) =d_o(\ga,\gc)d_o(\gb,\gd)> d_o(\ga,\gb)d_o(\gc,\gd)$
\item $d_o(\ga,\gb)d_o(\gc,\gd)=d_o(\ga,\gc)d_o(\gb,\gd)=d_o(\ga,\gd)d_o(\gb,\gc)$
\end{enumerate}
each corresponding to the four cases listed in the statement.
\end{proof}
 
The following lemma explains why the four cases outlined in the above lemma correspond to the four cases outlined in Figure~\ref{fig:trees}.

\begin{lem}\label{Lem:treeintersections}
Let $X$ be a tree and consider pairwise distinct $\ga,\gb,\gc,\gd\in\partial X$ and all six geodesics between them. There are only four possible cases for how these geodesics intersect and these correspond to the four cases outlined in Figure~\ref{fig:trees}.  
\end{lem}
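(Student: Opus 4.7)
The plan is to classify the four topological configurations of the convex hull of $\{\ga,\gb,\gc,\gd\}$ by elementary tree combinatorics, and then to match each configuration to one of the algebraic cases of Lemma~\ref{Lem:fourcases} by locating the tripod centres $p(\ga_i,\ga_j;\ga_k)$ and applying Proposition~\ref{Prop:crossratiometric}.

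First I would consider the subtree $Y:=[\ga,\gb]\cup[\ga,\gc]\cup[\ga,\gd]\cup[\gb,\gc]\cup[\gb,\gd]\cup[\gc,\gd]\subset X$. Since $X$ is a tree, $Y$ is the convex hull of $\{\ga,\gb,\gc,\gd\}$, and is itself a subtree with exactly four ends at infinity and no finite leaves. A standard counting argument (compactifying the four ends to leaves, $\sum_{v\text{ interior}}(\deg_Y(v)-2)=L-2=2$ with $L=4$) shows that either $Y$ has a unique branching vertex of degree $4$, or $Y$ has exactly two branching vertices of degree $3$ joined by a finite edge. In the first case all four rays emanate from a single point (the star configuration); in the second case the central edge splits $\{\ga,\gb,\gc,\gd\}$ into two pairs, and the three unordered bipartitions of a four-element set produce three non-equivalent caterpillar configurations. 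Together these are the four configurations drawn in Figure~\ref{fig:trees}.

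Next I would match each configuration to a case of Lemma~\ref{Lem:fourcases}. In a tree, the point $p(\ga,\gb;\gc)$ is the centre of the tripod $\Delta(\ga,\gb,\gc)$, namely the unique point lying on all three of $[\ga,\gb],[\ga,\gc],[\gb,\gc]$, which is necessarily a branching vertex of $Y$. In the star case every such tripod centre coincides with the single branching vertex of $Y$, so Proposition~\ref{Prop:crossratiometric} gives $\omega(\ga,\gb;\gc,\gd)=\omega(\ga,\gc;\gd,\gb)=\omega(\ga,\gd;\gc,\gb)=1$, matching case (4). In a caterpillar with central edge $[v_1,v_2]$ and bipartition $\{\ga,\gc\}\,|\,\{\gb,\gd\}$, one reads directly from the tree that $p(\ga,\gc;\gb)=p(\ga,\gc;\gd)=v_1$, while $p(\ga,\gb;\gc)=v_1$, $p(\ga,\gb;\gd)=v_2$, and $p(\ga,\gd;\gc)=v_1$, $p(\ga,\gd;\gb)=v_2$. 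Proposition~\ref{Prop:crossratiometric} then gives $\omega(\ga,\gc;\gd,\gb)=1$ and $\omega(\ga,\gb;\gc,\gd),\omega(\ga,\gd;\gc,\gb)<1$, matching case (1). The bipartitions $\{\ga,\gd\}\,|\,\{\gb,\gc\}$ and $\{\ga,\gb\}\,|\,\{\gc,\gd\}$ are treated symmetrically and yield cases (2) and (3) respectively.

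The main obstacle is not conceptual but rather the careful bookkeeping in the caterpillar analysis: for each of the three bipartitions one has to identify, for each of the six relevant tripods, whether its centre is $v_1$ or $v_2$, and then convert the position of the tripod centres along the geodesics into the sign of $\ln\omega$ via Proposition~\ref{Prop:crossratiometric}. Once this is handled cleanly, the bijection between the four topological configurations of Figure~\ref{fig:trees} and the four algebraic cases of Lemma~\ref{Lem:fourcases} is forced.
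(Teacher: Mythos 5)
Your proof is correct, but it runs in the opposite direction from the paper's. The paper starts from the algebra: it takes the four cases of Lemma~\ref{Lem:fourcases} as given, uses Proposition~\ref{Prop:crossratiometric} together with the permutation-invariance of the tripod centre to translate each condition $\omega(\cdot,\cdot;\cdot,\cdot)=1$ into an equality of two of the four points $p(\ga,\gb;\gc)$, $p(\ga,\gb;\gd)$, $p(\gc,\gd;\ga)$, $p(\gc,\gd;\gb)$, and then reads off the four configurations of Figure~\ref{fig:trees} from the resulting patterns of equalities and inequalities. You instead classify the geometry first: your counting argument on the compactified convex hull (one degree-$4$ branch point, or two degree-$3$ branch points joined by an edge, the latter indexed by the three bipartitions into pairs) gives an independent, self-contained proof that only four configurations exist, and you then compute the tripod centres in each configuration and push them through Proposition~\ref{Prop:crossratiometric} to recover the algebraic cases. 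What your route buys is that the enumeration of configurations is \emph{proved} rather than read off a figure, and the correspondence with Lemma~\ref{Lem:fourcases} is verified case by case with explicit signs of $\ln\omega$ (I checked your sign bookkeeping in the $\{\ga,\gc\}\mid\{\gb,\gd\}$ case and it is right); what the paper's route buys is brevity, since Lemma~\ref{Lem:fourcases} already guarantees there are only four algebraic possibilities and hence no configurations can have been missed. The two arguments are logically complementary and either suffices.
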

\begin{proof}
Given any distinct $\ga,\gb,\gc,\gd\in\partial X$, we know by Lemma~\ref{Lem:fourcases}, that there are only four possible cases for the values of the cross ratios. By Proposition~\ref{Prop:crossratiometric},
\begin{align*}
\omega(\ga,\gc;\gd,\gb)=1\iff p(\ga,\gc;\gd)=p(\ga,\gc;\gb)
\end{align*}
which is equivalent to $p(\gc,\gd;\ga)=p(\ga,\gb;\gc)$, since for trees the point $p(\ga,\gb;\gc)$ does not depend on the permutation of $\ga,\gb,\gc\in\partial X$ (see Figure~\ref{fig:specialpoint}). 
By the definition of the cross ratio
\begin{align*}
\omega(\ga,\gc;\gd,\gb)=1\iff\omega(\gd,\gb;\ga,\gc)=1 
\end{align*}
which is equivalent to $p(\ga,\gb;\gd)=p(\gc,\gd;\gb).$
Similarly, 
\begin{align*}
\omega(\ga,\gd;\gc,\gb)= 1\iff p(\gc,\gd;\ga)= p(\ga,\gb;\gd)\iff p(\ga,\gb;\gc)=p(\gc,\gd;\gb)
\end{align*}
therefore we can rewrite the four cases of Lemma~\ref{Lem:fourcases} in terms of equalities and inequalities of the points $p(\ga,\gb;\gc)$, $p(\ga,\gb;\gd)$, $p(\gc,\gd;\ga)$ and $p(\gc,\gd;\gb)$:
\begin{enumerate}
  \item $p(\ga,\gb;\gc)=p(\gc,\gd;\ga)\neq p(\ga,\gb;\gd)=p(\gc,\gd;\gb)$  
  \item $p(\ga,\gb;\gd)=p(\gc,\gd;\ga)\neq p(\ga,\gb;\gc)=p(\gc,\gd;\gb)$
\item
$p(\ga,\gb;\gc)=p(\ga,\gb;\gd)\neq p(\gc,\gd;\ga)=p(\gc,\gd;\gb)$
\item 
$p(\ga,\gb;\gc)=p(\ga,\gb;\gd)=p(\gc,\gd;\ga)=p(\gc,\gd;\gb)$
\end{enumerate}
These four possible cases clearly correspond to the four cases pictured in Figure~\ref{fig:trees}. 
\end{proof}
In the next proposition, we give a uniform condition on trees and real hyperbolic space for when two geodesics intersect. Before this, we introduce the following notation:
\begin{align*}
\oplus:&=\begin{cases}
+&\text{if $X$ is a hyperbolic space}\\
\max&\text{if $X$ is a tree}
\end{cases}
\end{align*}
\begin{prop}\label{Prop:uniformintersects}
Let $X$ be either a tree or $\mathbb{R}\sH^n$, and consider pairwise distinct $\ga,\gb,\gc,\gd\in\partial X$. Then, $[\ga,\gb]$ intersects $[\gc,\gd]$ if and only if 
\begin{align*}
\omega(\ga,\gc;\gd,\gb)\oplus \omega(\ga,\gd;\gc,\gb)=1
\end{align*}
\end{prop}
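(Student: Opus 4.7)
The two cases of the hypothesis have rather different flavours, so I would tackle them separately.

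For $X$ a tree, everything is already set up by Lemmas~\ref{Lem:fourcases} and~\ref{Lem:treeintersections}: the six geodesics spanned by $\ga,\gb,\gc,\gd$ sit in one of exactly four configurations, listed both by cross-ratio data and by equalities among the points $p(\ga,\gb;\gc)$, $p(\ga,\gb;\gd)$, $p(\gc,\gd;\ga)$, $p(\gc,\gd;\gb)$. A direct case check shows that $[\ga,\gb]\cap[\gc,\gd]\neq\emptyset$ in cases (1), (2), (4) (a shared point is produced by an equality such as $p(\ga,\gb;\gc)=p(\gc,\gd;\ga)$) and is empty in case (3) (the two closest-point projections $p(\ga,\gb;\gc)=p(\ga,\gb;\gd)$ and $p(\gc,\gd;\ga)=p(\gc,\gd;\gb)$ are distinct, joined by a bridge). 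Reading the cross-ratio values off Lemma~\ref{Lem:fourcases}, one sees that $\max(\omega(\ga,\gc;\gd,\gb),\omega(\ga,\gd;\gc,\gb))=1$ precisely in cases (1), (2), (4), whereas in case (3) both entries exceed $1$. This settles the tree case with $\oplus=\max$.

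For $X=\mathbb{R}\sH^n$, the backward implication is immediate from the Ptolemy inequality (Theorem~\ref{Thm:hyperbolicintersects}): if $\omega(\ga,\gc;\gd,\gb)+\omega(\ga,\gd;\gc,\gb)=1$, the inequality is saturated and the two geodesics must meet. For the forward direction, I would suppose $[\ga,\gb]\cap[\gc,\gd]=\{p\}$ and show that the convex hull of $\{\ga,\gb,\gc,\gd\}$ is isometric to an ideal quadrilateral in $\mathbb{R}\sH^2$; then Theorem~\ref{Thm:hyperbolicintersects} furnishes the Ptolemy equality. To build this $\mathbb{R}\sH^2$ I would invoke Section~\ref{Section:IsometricEmbeddings}: placing $p$ at the canonical basepoint, the unit tangent vectors $u,v\in p^{\perp}$ to $[\ga,\gb]$ and $[\gc,\gd]$ at $p$ satisfy $\langle u\mid v\rangle\in\mathbb{R}$ automatically over $\mathbb{K}=\mathbb{R}$, so the real span of $u,v,p$ is a 3-dimensional subspace of $\mathbb{R}^{n+1}$ whose image in $\mathbb{R}\sH^n$ is a totally geodesic copy of $\mathbb{R}\sH^2$ containing both geodesic lines, and hence all four boundary points.

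The main obstacle is precisely this last step, namely packaging the totally geodesic embedding correctly. The condition $\langle u\mid v\rangle\in\mathbb{R}$ is automatic over $\mathbb{K}=\mathbb{R}$ but generally fails in $\mathbb{C}\sH^n$, $\mathbb{H}\sH^n$ and $\mathbb{O}\sH^2$, where two intersecting geodesics can span a totally geodesic subspace (such as a complex line) that is not isometric to $\mathbb{R}\sH^2$. This is why the proposition is stated only for trees and $\mathbb{R}\sH^n$; extending the characterisation to the other hyperbolic spaces will presumably require additional work later in Section~\ref{Section:Reconstruction}.
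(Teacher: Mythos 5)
Your proposal is correct and follows essentially the same route as the paper: the tree case is read off from Lemma~\ref{Lem:fourcases} and Lemma~\ref{Lem:treeintersections} exactly as in the paper, and the $\mathbb{R}\sH^n$ case reduces to Theorem~\ref{Thm:hyperbolicintersects} via the observation that two intersecting geodesics in $\mathbb{R}\sH^n$ lie in a totally geodesic copy of $\mathbb{R}\sH^2$ (Section~\ref{Section:IsometricEmbeddings}), since $\langle u\mid v\rangle\in\mathbb{R}$ is automatic over $\mathbb{K}=\mathbb{R}$. Your closing remark about why the statement cannot extend verbatim to $\mathbb{C}\sH^n$, $\mathbb{H}\sH^n$, $\mathbb{O}\sH^2$ also matches the discussion the paper gives immediately after the proposition.
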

\begin{proof}
For a real hyperbolic space, any pair of intersecting geodesics are contained in an isometrically embedded real hyperbolic plane (cf. Section~\ref{Section:IsometricEmbeddings}) and therefore, the statement follows directly from Theorem~\ref{Thm:hyperbolicintersects}.\\
For a tree, note that $[\ga,\gb]$ and $[\gc,\gd]$ intersect in cases 1,2 and 4 of Figure~\ref{fig:trees} and by Lemma~\ref{Lem:treeintersections}, these correspond to cases 1,2 and 4 of Lemma~\ref{Lem:fourcases}. Indeed, in these cases
\begin{align*}
\max\{\omega(\ga,\gc;\gd,\gb),\omega(\ga,\gd;\gc,\gb)\}=1
\end{align*}
whereas in case 3, $\max\{\omega(\ga,\gc;\gd,\gb),\omega(\ga,\gd;\gc,\gb)\}>1.$
\end{proof}
It follows from Theorem~\ref{Thm:hyperbolicintersects} that if $\ga,\gb,\gc,\gd\in\partial X$ are any four distinct boundary points of a hyperbolic space (not necessarily real) and the above condition on the cross ratios holds, then $[\ga,\gb]$ and $[\gc,\gd]$ intersect. However, the condition will not hold for a pair of intersecting geodesics which are not contained in an isometric embedding of the real hyperbolic plane. If $\ga,\gb,\gc,\gd\in\partial X$ are any four distinct boundary points of a tree, then the above proposition tell us exactly when $[\ga,\gb]$ and $[\gc,\gd]$ are two intersecting geodesics. However, this does not account for geodesic lines of the form $[\ga,\gb]$ and $[\ga,\gc]$, which intersect in every point along the geodesic ray $[p(\ga,\gb;\gc),\ga]$. The following proposition shows that any pair of intersecting geodesics in a tree or hyperbolic space satisfy a condition in terms of cross ratios.
\begin{figure}
     \centering
\includegraphics[width=0.9\linewidth]{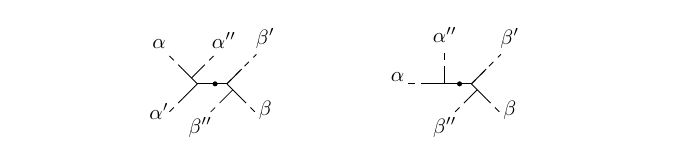}
 \caption{If $[\ga,\gb]$ and $[\ga',\gb']$ are two intersecting geodesics of a tree and $o\in[\ga,\gb]\cap[\ga',\gb']$, then one may always find a third geodesic $[\ga'',\gb'']$ such that $o\in[\ga'',\gb'']$. In both pictures $o$ is marked and on the left picture all boundary points are distinct whereas on the right $\ga=\ga'$.}
 \label{fig:treesthirdgeodesic}
 \end{figure}
\begin{prop}\label{Prop:thirduniformintersects}
Let $X$ be either a tree or a hyperbolic space and let $\ga,\gb,\ga',\gb'\in\partial X$. If
$[\ga,\gb]$ and $[\ga',\gb']$ are two intersecting geodesics, i.e. $[\ga,\gb]_{\gc}(t)=[\ga',\gb']_{\gc'}(s)$ for some $\gc,\gc'\in\partial X$ and some $s,t\in\mathbb{R}$, then there exist $\ga'',\gb'',\gc''\in\partial X$ such that
\begin{align*}
\omega(\ga,\ga'';\gb'',\gb)\oplus\omega(\ga,\gb'';\ga'',\gb)=1\qquad
\omega(\ga',\ga'';\gb'',\gb')\oplus\omega(\ga',\gb'';\ga'',\gb')=1
\end{align*}
and $[\ga,\gb]_{\gc}(t)=[\ga'',\gb'']_{\gc''}(r)=[\ga',\gb']_{\gc'}(s)$ for some $r\in\mathbb{R}$.
\end{prop}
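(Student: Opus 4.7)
Write $o:=[\ga,\gb]_{\gc}(t)=[\ga',\gb']_{\gc'}(s)\in X$ for the assumed common intersection point. The strategy is to produce a third geodesic $[\ga'',\gb'']$ through $o$ whose endpoints are all distinct from $\ga,\gb,\ga',\gb'$ and such that each of the pairs $\{[\ga,\gb],[\ga'',\gb'']\}$ and $\{[\ga',\gb'],[\ga'',\gb'']\}$ satisfies the hypothesis of Proposition~\ref{Prop:uniformintersects} (or, for non-real hyperbolic spaces, of Theorem~\ref{Thm:hyperbolicintersects}). Once this is done, the two cross-ratio equations follow immediately, and choosing any $\gc''\in\partial X\setminus\{\ga'',\gb''\}$ together with the unique $r\in\mathbb{R}$ with $[\ga'',\gb'']_{\gc''}(r)=o$ finishes the proof.

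\textbf{Tree case.} If $X$ is a tree, the four points $\ga,\gb,\ga',\gb'$ determine at most four directions at $o$ in the sense of connected components of $X\setminus\{o\}$. Each such component contains uncountably many boundary points, because thickness forces every vertex to have degree at least $3$, so every ray emanating from $o$ eventually meets further branch points. Split the four given boundary points into the two ``sides'' of any of the geodesics through $o$ (as in Figure~\ref{fig:treesthirdgeodesic}); then pick $\ga''$ in a component of $X\setminus\{o\}$ meeting the ``$\ga$-side'' and $\gb''$ in one meeting the ``$\gb$-side'', with both chosen away from the finite set $\{\ga,\gb,\ga',\gb'\}$. By construction, $o\in[\ga'',\gb'']$ and the four-tuples $(\ga,\gb,\ga'',\gb'')$ and $(\ga',\gb',\ga'',\gb'')$ are pairwise distinct, so Proposition~\ref{Prop:uniformintersects} applies in each case to produce the two claimed equalities with $\oplus=\max$.

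\textbf{Hyperbolic case and main obstacle.} If $X=\mathbb{K}\sH^n$, the tangent space $T_oX\cong \mathbb{K}^n$ carries the positive definite form of Section~\ref{Section:HyperbolicSpaces}, and the isometric embeddings of $\mathbb{R}\sH^2$ described in Section~\ref{Section:IsometricEmbeddings} show that two geodesics through $o$ with unit tangent vectors $u,v\in T_oX$ lie in a common isometric copy of $\mathbb{R}\sH^2$ precisely when $\langle u\mid v\rangle\in\mathbb{R}$. Let $u,u'$ be the unit tangents to $[\ga,\gb]$ and $[\ga',\gb']$ at $o$. The heart of the argument, alluded to before Proposition~\ref{Prop:thirduniformintersects} and going back to Bourdon's proof of \cite[Theorem~0.1]{bourdon96}, is to exhibit a unit vector $v\in T_oX$, not parallel to $u$ or $u'$, with $\langle u\mid v\rangle\in\mathbb{R}$ and $\langle u'\mid v\rangle\in\mathbb{R}$. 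The geodesic through $o$ in direction $v$ then lies in an isometric $\mathbb{R}\sH^2$ together with each of the two given geodesics, and letting $\ga'',\gb''\in\partial X$ be its endpoints (chosen distinct from $\ga,\gb,\ga',\gb'$, which is possible since $v$ can be perturbed within the real affine solution set of the two real-inner-product conditions) we apply Theorem~\ref{Thm:hyperbolicintersects} in each plane to read off the two cross-ratio equalities with $\oplus=+$. The main obstacle is precisely this linear-algebra step producing $v$: it is trivial for $\mathbb{K}=\mathbb{R}$, routine for $\mathbb{K}=\mathbb{C}$ or $\mathbb{H}$ (the conditions $\langle u\mid v\rangle,\langle u'\mid v\rangle\in\mathbb{R}$ cut out a real subspace of $T_oX$ of sufficiently large real codimension), and requires extra care for $\mathbb{K}=\mathbb{O}$, where one must keep $v$ inside an associative subalgebra with $u$ and with $u'$ so that the form $\langle\cdot\mid\cdot\rangle$ behaves as needed.
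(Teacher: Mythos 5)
Your proposal is correct and follows essentially the same route as the paper: in the tree case you use thickness to choose ends $\ga'',\gb''$ on opposite sides of the intersection point and distinct from $\ga,\gb,\ga',\gb'$, and in the hyperbolic case you reduce to Bourdon's tangent-space construction of a third direction $w$ with $\langle u\mid w\rangle,\langle u'\mid w\rangle\in\mathbb{R}$, exactly as in \cite[Section 3.4]{bourdon96}, which the paper also cites rather than reproves. The only quibble is a wording slip ("sufficiently large real codimension" should be sufficiently \emph{small} codimension, i.e.\ a solution subspace of large enough real dimension to supply a direction giving a genuinely new geodesic).
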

\begin{proof}
Suppose $X$ is a hyperbolic space and $[\ga,\gb]$ and $[\ga',\gb']$ intersect in a point $o$, then $\ga,\gb,\ga',\gb'$ must be distinct. As in Section~\ref{Section:IsometricEmbeddings}, we may suppose that $o=[x]$ where $x=(0,\ldots,0,1)$ and let $u,v\in x^{\perp}$ be the distinct unit tangent vectors corresponding to $[\ga,\gb]$ and $[\ga',\gb']$. In \cite[Section 3.4.]{bourdon96} Bourdon shows that there exists $w\in x^{\perp}$ such that $\langle u\mid w\rangle\in\mathbb{R}$ and $\langle v\mid w\rangle\in\mathbb{R}$, thus proving that there exists a geodesic line $[\ga'',\gb'']$ through $o$ such that $[\ga,\gb]$ and $[\ga'',\gb'']$ are contained in an isometric embedding of the real hyperbolic plane and $[\ga',\gb']$ and $[\ga'',\gb'']$ are also contained in an isometric embedding of the real hyperbolic plane. Since $\ga,\gb,\ga',\gb',\ga'',\gb''$ are distinct, the conditions on the cross ratio hold by Proposition~\ref{Prop:uniformintersects}. Since $[\ga'',\gb'']$ contains $o$, we can parametrise this geodesic (cf. Definition~\ref{Def:parametrisedgeodesicnotation}) using any $\gc''\in\partial X\setminus\{\ga'',\gb''\}$ and we can find $r\in\mathbb{R}$ such that $o=[\ga'',\gb'']_{\gc''}(r)$.

Suppose $X$ is a tree and $[\ga,\gb]$ and $[\ga',\gb']$ intersect, in this case $\ga,\gb,\ga',\gb'$ are not necessarily distinct. However, since $X$ is thick, there are infinitely many points of $[\ga,\gb]$ of the form $p(\ga,\gb;\gd)$ for some $\gd\in\partial X\setminus\{\ga,\gb\}$, so for any $o\in[\ga,\gb]\cap[\ga',\gb']$ we may find $\ga'',\gb''\in\partial X\setminus\{\ga,\gb,\ga',\gb'\}$ such that 
$o\in[p(\ga,\gb;\ga''),p(\ga,\gb;\gb'')]$ (see Figure~\ref{fig:treesthirdgeodesic}).
 By construction, $o\in[\ga'',\gb'']$ and we may find $\gc''\in\partial X\setminus\{\ga'',\gb''\}$ and $r\in\mathbb{R}$ such that $o=[\ga'',\gb'']_{\gc''}(r)$. Since $\ga,\gb,\ga'',\gb''$ are  distinct and $\ga',\gb',\ga'',\gb''$ are also distinct and $[\ga'',\gb'']$ intersects $[\ga,\gb]$ and $[\ga',\gb']$ it follows from Proposition~\ref{Prop:uniformintersects} that the conditions on the cross ratio hold.
\end{proof}

If $[\ga,\gb]$ and $[\gc,\gd]$ intersect in a tree, then their intersection is the geodesic segment $[p(\ga,\gb;\gc),p(\ga,\gb;\gd)]$, below is the analogue of this property for the hyperbolic spaces.

\begin{prop}\label{Prop:symmetricintersects}
Let $X$ be a hyperbolic space. Suppose $[\ga,\gb]$ and $[\gc,\gd]$ intersect at $o\in X$ for some $\ga,\gb,\gc,\gd\in\partial X$. Then $o$ is the midpoint of $p(\ga,\gb;\gc)$ and $p(\ga,\gb;\gd)$.
\end{prop}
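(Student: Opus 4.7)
The strategy is to use the geodesic symmetry provided by the symmetric space structure of $X$. At the point $o$ there is an isometric involution $\varphi_o\colon X\to X$ fixing $o$ and reversing every geodesic line through $o$ (Section~\ref{Section:Preliminaries}). Since $o$ lies on both $[\ga,\gb]$ and $[\gc,\gd]$, the extension of $\varphi_o$ to $\partial X$ swaps $\ga\leftrightarrow\gb$ and $\gc\leftrightarrow\gd$.

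Next, I would check that $\varphi_o$ sends $p(\ga,\gb;\gc)$ to $p(\ga,\gb;\gd)$. Because the Gromov product is defined purely from distances in $X$, any isometry $f$ of $X$ is equivariant for the Bourdon metric in the sense that $d_{f(p)}(f(\xi),f(\eta))=d_p(\xi,\eta)$ for $\xi,\eta\in\partial X$ and $p\in X$. Applying this to the characterising condition $d_p(\ga,\gc)=d_p(\gb,\gc)$ from Definition~\ref{Def:SpecialPoint}, I obtain
\begin{equation*}
\varphi_o\bigl(p(\ga,\gb;\gc)\bigr)\;=\;p\bigl(\varphi_o(\ga),\varphi_o(\gb);\varphi_o(\gc)\bigr)\;=\;p(\gb,\ga;\gd)\;=\;p(\ga,\gb;\gd),
\end{equation*}
the last equality because the defining condition is symmetric in its first two arguments.

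To conclude, the restriction of $\varphi_o$ to the geodesic line $[\ga,\gb]$ is an isometric involution fixing $o$, hence the Euclidean reflection through $o$. It exchanges $p(\ga,\gb;\gc)$ with $p(\ga,\gb;\gd)$, so $o$ is the midpoint of these two points.

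The only real bookkeeping hurdle is verifying the isometry-equivariance of $p(\cdot,\cdot;\cdot)$ and that $\varphi_o$ acts on $\partial X$ as stated; both follow transparently from the definitions in Sections~\ref{Section:Preliminaries} and \ref{Section:ProperCAT(-1)}. A pleasant feature of this approach is that no analysis of whether $[\ga,\gb]$ and $[\gc,\gd]$ lie in a common isometrically embedded $\mathbb{R}\sH^2$ is required — which is essential, since in $\mathbb{K}\sH^n$ for $\mathbb{K}\neq\mathbb{R}$ a generic pair of intersecting geodesics need not.
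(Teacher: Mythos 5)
Your proof is correct and follows essentially the same route as the paper: both use the geodesic symmetry $\varphi_o$ of the symmetric space, show it carries $p(\ga,\gb;\gc)$ to $p(\ga,\gb;\gd)$ via isometry-invariance of the Gromov product/Bourdon metric, and conclude that $o$ is the midpoint because $\varphi_o$ restricted to $[\ga,\gb]$ is the reflection through $o$. The only cosmetic difference is that the paper verifies the equivariance by writing out the limits defining the Gromov products explicitly rather than invoking it as a general principle.
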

\begin{proof}
 Since $X$ is a symmetric space, we know there exists an isometry $\varphi_o$ of $X$ which fixes $o$ and reverses the direction of all geodesics through $o$. 

Let $x(t)$ be the geodesic parametrisation of $[\ga,\gb]$  with $x(-\infty)=\ga,\,x(0)=o,\,x(\infty)=\gb$ and let $y(t)$ be the geodesic parametrisation of $[\gc,\gd]$ with $y(-\infty)=\gc,\,y(0)=o,\,y(\infty)=\gd$. Then, using the definition of the Gromov product (see Section~\ref{Section:BourdonMetric}),
\begin{align*}
(\gb\mid\gd)_p=\lim_{t\rightarrow\infty}(x(t)\mid y(t))_p,\qquad
(\ga\mid\gc)_p=\lim_{t\rightarrow\infty}(x(-t)\mid y(-t))_p.
\end{align*}
for all $p\in X$.
Since the Gromov product $(x(t)\mid y(t))_p$ depends only on the metric and $\varphi_o$ is an isometry, we know that
\begin{align*}
(x(t)\mid y(t))_p=(\varphi_o(x(t))\mid \varphi_o(y(t)))_{\varphi_o(p)}=(x(-t)\mid y(-t))_{\varphi_o(p)}
\end{align*}
for all $t\in\mathbb{R}$.
It follows that $(\ga\mid \gc)_p=(\gb\mid\gd)_{\varphi_o(p)}$ and $(\gb\mid \gc)_p=(\ga\mid\gd)_{\varphi_o(p)}$. 
Therefore, the image of $p(\ga,\gb;\gc)$ under $\varphi_o$ is $p(\ga,\gb;\gd)$. Since $\varphi_o$ is an isometry fixing $o$, it follows that $d(o,p(\ga,\gb;\gc))=d(o,p(\ga,\gb;\gd))$.
\end{proof}

\begin{figure}
    \centering
\includegraphics[width=0.8\linewidth]{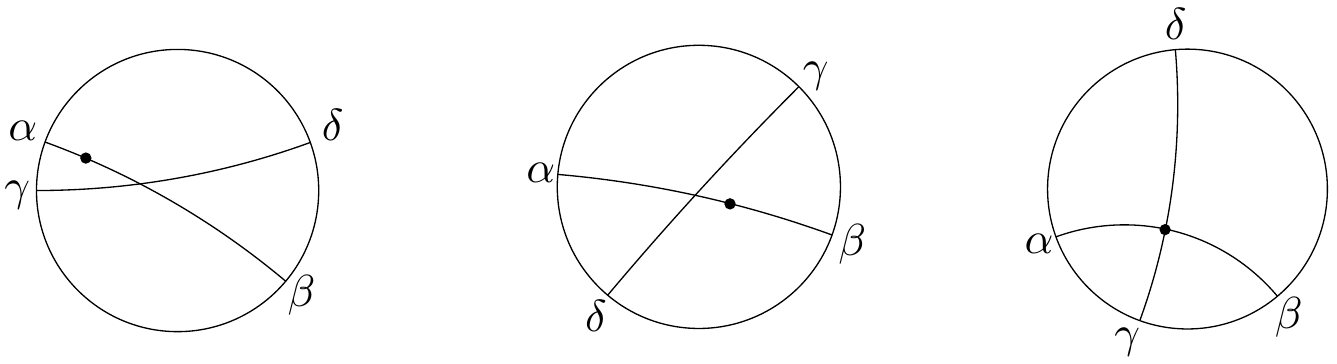}
\caption{Three cases for intersecting geodesics in $\mathbb{R}\sH^2$. From left to right: $\omega(\ga,\gb;\gc,\gd)<1$, $\omega(\ga,\gb;\gc,\gd)>1$, $\omega(\ga,\gb;\gc,\gd)=1$. In each case the point $p(\ga,\gb;\gc)$ is marked. }
\label{Figure:hyperbolicplanes}
\end{figure}

Now we are ready to give a uniform description of the intersection of $[\ga,\gb]$ and $[\gc,\gd]$. Before this, we introduce the following notation: 
\begin{align*}
\chi(\ga,\gb;\gc,\gd):&=
\begin{cases}
\left\{\dfrac{\abs{\ln\omega(\ga,\gb;\gc,\gd)}}{2}\right\}
&\text{if $X$ is a hyperbolic space}\\
[0,\abs{\ln\omega(\ga,\gb;\gc,\gd)}]
&\text{if $X$ is a tree}
\end{cases}
\end{align*}

\begin{prop}\label{Prop:uniformintersection}
Let $X$ be either a hyperbolic space or a tree. If pairwise distinct $\ga,\gb,\gc,\gd\in\partial X$ satisfy 
\begin{align*}
&\omega(\ga,\gc;\gd,\gb)\oplus\omega(\ga,\gd;\gc,\gb)=1
&\omega(\ga,\gb;\gc,\gd)\leq 1
\end{align*}
then
$[\ga,\gb]_{\gc}(s)=[\gc,\gd]_{\ga}(t)$ if and only if $s=t\in\chi(\ga,\gb;\gc,\gd)$.
\end{prop}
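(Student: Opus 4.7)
The plan is to use Proposition~\ref{Prop:uniformintersects} to first deduce from $\omega(\ga,\gc;\gd,\gb)\oplus\omega(\ga,\gd;\gc,\gb)=1$ that the intersection $[\ga,\gb]\cap[\gc,\gd]$ is non-empty, and then to pin down its parametric description separately in the hyperbolic and tree cases. In both cases the argument reduces to applying Proposition~\ref{Prop:crossratiometric} together with the hypothesis $\omega(\ga,\gb;\gc,\gd)\leq 1$ to locate the relevant $p$-points on each geodesic and to identify the directions of the two parametrisations.

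For the hyperbolic case, I would invoke Proposition~\ref{Prop:symmetricintersects}, which asserts that the (necessarily unique) intersection point $o$ is the midpoint of $p(\ga,\gb;\gc)$ and $p(\ga,\gb;\gd)$ on $[\ga,\gb]$, and simultaneously the midpoint of $p(\gc,\gd;\ga)$ and $p(\gc,\gd;\gb)$ on $[\gc,\gd]$. The hypothesis $\omega(\ga,\gb;\gc,\gd)\leq 1$ together with Proposition~\ref{Prop:crossratiometric} forces $p(\ga,\gb;\gc)\in[p(\ga,\gb;\gd),\ga]$, so that $o$ lies on the $\gb$-side of $p(\ga,\gb;\gc)=[\ga,\gb]_{\gc}(0)$, yielding $s=d(p(\ga,\gb;\gc),o)=|\ln\omega(\ga,\gb;\gc,\gd)|/2$. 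Since $d_o$ is symmetric, $\omega(\gc,\gd;\ga,\gb)=\omega(\ga,\gb;\gc,\gd)\leq 1$, and the identical argument on the $\ga$-parametrisation of $[\gc,\gd]$ gives $t=|\ln\omega(\ga,\gb;\gc,\gd)|/2$. As two distinct geodesic lines in a CAT(-1) space meet in at most one point, this pins down the equivalence in both directions.

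For the tree case, Lemma~\ref{Lem:treeintersections} together with $\omega(\ga,\gb;\gc,\gd)\leq 1$ and the intersection hypothesis leaves only cases (1) and (4) of Figure~\ref{fig:trees}; in both, $p(\ga,\gb;\gc)=p(\gc,\gd;\ga)$ and $p(\ga,\gb;\gd)=p(\gc,\gd;\gb)$, and the intersection $[\ga,\gb]\cap[\gc,\gd]$ is precisely the geodesic segment joining these two points. Applying Proposition~\ref{Prop:crossratiometric} exactly as in the previous paragraph, this segment is parametrised in the $\gc$-parametrisation of $[\ga,\gb]$ by $s\in[0,|\ln\omega(\ga,\gb;\gc,\gd)|]$, and by the symmetry of the cross ratio it is parametrised in the $\ga$-parametrisation of $[\gc,\gd]$ by $t\in[0,|\ln\omega(\ga,\gb;\gc,\gd)|]$. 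Because both parametrisations start at the common point $p(\ga,\gb;\gc)=p(\gc,\gd;\ga)$ and proceed along the same segment toward the common far endpoint $p(\ga,\gb;\gd)=p(\gc,\gd;\gb)$, the parameters $s$ and $t$ of any point of the intersection coincide.

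The main obstacle is the sign bookkeeping: one must consistently use Proposition~\ref{Prop:crossratiometric} with $\omega(\ga,\gb;\gc,\gd)\leq 1$ to verify that, in each of the two parametrisations and in each of the two ambient cases, the intersection lies on the non-negative side of the zero parameter, and that the two parametrisations traverse the intersection in matching directions. Once this is in place, $s=t$ follows from the observation that two parametrisations of a shared geodesic segment that agree at one endpoint and on the direction of travel must agree everywhere.
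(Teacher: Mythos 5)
Your proof is correct and follows essentially the same route as the paper's: establish that $[\ga,\gb]$ and $[\gc,\gd]$ intersect from the cross-ratio hypothesis, then use Proposition~\ref{Prop:symmetricintersects} (midpoint of $p(\ga,\gb;\gc)$ and $p(\ga,\gb;\gd)$) in the hyperbolic case and the segment $[p(\ga,\gb;\gc),p(\ga,\gb;\gd)]$ in the tree case, with Proposition~\ref{Prop:crossratiometric} and the hypothesis $\omega(\ga,\gb;\gc,\gd)\leq 1$ handling the sign and direction bookkeeping. The only slight imprecision is that for a non-real hyperbolic space the existence of the intersection should be cited from the equality case of Theorem~\ref{Thm:hyperbolicintersects} rather than from Proposition~\ref{Prop:uniformintersects}, which is stated only for trees and $\mathbb{R}\sH^n$ (the paper cites both).
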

\begin{proof}
By Theorem~\ref{Thm:hyperbolicintersects} and Proposition~\ref{Prop:uniformintersects}, we know that $[\ga,\gb]$ and $[\gc,\gd]$ intersect and by  Proposition~\ref{Prop:crossratiometric} we know that 
\begin{align*}
d(p(\ga,\gb;\gc),p(\ga,\gb;\gd))=\abs{\ln\omega(\ga,\gb;\gc,\gd)}= \abs{\ln\omega(\gc,\gd;\ga,\gb)}=d(p(\gc,\gd;\ga),p(\gc,\gd;\gb)) 
\end{align*}
and $p(\ga,\gb;\gc)\in[p(\ga,\gb;\gd),\ga],\,p(\gc,\gd;\ga)\in[p(\gc,\gd;\gb),\gc]$.
 It follows that  
\begin{align*}
p(\ga,\gb;\gd)=[\ga,\gb]_{\gc}(t),\qquad p(\gc,\gd;\gb)=[\gc,\gd]_{\ga}(t),
\end{align*} 
for $t=\abs{\ln\omega(\ga,\gb;\gc,\gd)}$.

If $X$ is a hyperbolic space then the intersection point of $[\ga,\gb]$ and $[\gc,\gd]$ is the midpoint of $p(\ga,\gb;\gc)$ and $p(\ga,\gb;\gd)$, equivalently the midpoint of $p(\gc,\gd;\ga)$ and $p(\gc,\gd;\gb)$ (see the proof of Proposition~\ref{Prop:symmetricintersects}) and therefore
\begin{align*}
[\ga,\gb]_{\gc}(s)=[\gc,\gd]_{\ga}(t)\iff 
s=t=\dfrac{\abs{\ln\omega(\ga,\gb;\gc,\gd)}}{2}.
\end{align*}

If $X$ is a tree then the intersection of $[\ga,\gb]$ and $[\gc,\gd]$ is $[p(\ga,\gb;\gc), p(\ga,\gb;\gd)]$, equivalently the geodesic segment $[p(\gc,\gd;\ga),p(\gc,\gd;\gb)]$ (see the proof of Lemma~\ref{Lem:treeintersections}). It follows that 
\begin{align*}
[\ga,\gb]_{\gc}(s)=[\gc,\gd]_{\ga}(t)
\iff s=t\in[0,\abs{\ln\omega(\ga,\gb;\gc,\gd)}].
\end{align*}
\end{proof}

Proposition~\ref{Prop:thirduniformintersects} tells us exactly when two geodesics in a tree or hyperbolic space intersect, in terms of the cross ratio on the boundary. Combining this with the above proposition, we also understand how two geodesics in a tree or hyperbolic space intersect, i.e. we may give their intersection points using the boundary points and cross ratio. We will now use this to define a model space for $X$.
\begin{defin}\label{Def:equivalencerelation}
Let $X$ be either a hyperbolic space or a tree and define
\begin{align*}
\Omega(X):=\bigslant{\bigcup\limits_{\ga,\gb\in\partial X}\chi_{\ga\gb}}{\sim},
\end{align*}
where $\sim$ is the equivalence relation generated by the following conditions:
\begin{enumerate}
\item For all $t\in\mathbb{R}$ and all $\ga,\gb,\gc\in\partial X$: $(\ga,\gb,\gc,t)\sim(\gb,\ga,\gc,-t)$.
\item If $\ga,\gb,\gc,\gd\in\partial X$ satisfy $\omega(\ga,\gc;\gd,\gb)\oplus\omega(\ga,\gd;\gc,\gb)=1, \,\omega(\ga,\gb;\gc,\gd)\leq 1$, then 
$(\ga,\gb,\gc,s)\sim(\gc,\gd,\ga,t)$ if and only if   $s=t\in\chi(\ga,\gb;\gc,\gd)$.
\end{enumerate}
\end{defin}
We conclude by giving the proof of the main theorem. 
\begin{thm:maintheorem}
Let $(X,d_X)$ be either a hyperbolic space or a tree. 
Then, the map
\begin{align*}
&d_{\omega}:\Omega(X)\times\Omega(X)\rightarrow\mathbb{R}\qquad
&d_{\omega}((\ga,\gb,\gc,s),(\ga,\gb,\gc,t)):=\abs{s-t}
\end{align*}
is a well-defined metric on $\Omega(X)$ and 
\begin{align*}
\varphi:\Omega(X)\rightarrow X\qquad
\varphi:(\ga,\gb,\gc,t)\mapsto[\ga,\gb]_{\gc}(t)
\end{align*}
is an isometry.
\end{thm:maintheorem}
\begin{proof}
We start by showing that $\varphi$ is a well-defined bijection. In order to show that $\varphi$ is well-defined, we need to verify that if $(\ga,\gb,\gc,t)=(\gb,\ga,\gc,-t)$ then $[\ga,\gb]_{\gc}(t)=[\gb,\ga]_{\gc}(-t)$, which is clearly true for all $t\in\mathbb{R}$ and all $\ga,\gb,\gc\in\partial X$. We further need to verify that if $(\ga,\gb,\gc,s)=(\gc,\gd,\ga,t)$ for some $\ga,\gb,\gc,\gd\in\partial X$ and some $s,t\in\mathbb{R}$, then $[\ga,\gb]_{\gc}(s)=[\gc,\gd]_{\ga}(t)$, this follows from Proposition~\ref{Prop:uniformintersection}. So we know that $\varphi$ is well-defined and since every point of $X$ lies in at least one geodesic $[\ga,\gb]$, we certainly have surjectivity - it is left to prove injectivity.

Suppose that  $[\ga,\gb]_{\gc}(s)=[\ga',\gb']_{\gc'}(t)$ for some $s,t\in\mathbb{R}$ and  $\ga,\gb,\gc,\ga',\gb',\gc'\in \partial X$ then by Proposition~\ref{Prop:thirduniformintersects}, there exist
$\ga'',\gb'',\gc''\in\partial X$ such that
\begin{align*}
\omega(\ga,\ga'';\gb'',\gb)\oplus\omega(\ga,\gb'';\ga'',\gb)=1\qquad
\omega(\ga',\ga'';\gb'',\gb')\oplus\omega(\ga',\gb'';\ga'',\gb')=1
\end{align*}
and 
\begin{align*}
[\ga,\gb]_{\gc}(s)=[\ga'',\gb'']_{\gc''}(r)=[\ga',\gb']_{\gc'}(t)
\end{align*} 
for some $r\in\mathbb{R}$.
Furthermore, by 
 Lemma~\ref{Lem:parametriseddistance}, there exist $s',t',r',r''\in\mathbb{R}$ such that
\begin{align*}
&[\ga,\gb]_{\gc}(s)= 
[\ga,\gb]_{\ga''}(s'),\qquad [\ga',\gb']_{\gc'}(t)=[\ga',\gb']_{\ga''}(t),\\ &\qquad[\ga'',\gb'']_{\gc''}(r)=[\ga'',\gb'']_{\ga}(r')=[\ga'',\gb'']_{\ga'}(r'').
\end{align*}
By Proposition~\ref{Prop:geodesicmetric}, this is equivalent to
\begin{align*}
&(\ga,\gb,\gc,s)= 
(\ga,\gb,\ga'',s'),\qquad (\ga',\gb',\gc',t)=(\ga',\gb',\ga'',t),\\ &\qquad(\ga'',\gb'',\gc'',r)=(\ga'',\gb'',\ga,r')=(\ga'',\gb'',\ga',r'').
\end{align*}
It follows that 
\begin{align*}
&[\ga,\gb]_{\gc}(s)=[\ga'',\gb'']_{\gc''}(r)=[\ga',\gb']_{\gc'}(t)\\ &\qquad \iff [\ga,\gb]_{\ga''}(s')=[\ga'',\gb'']_{\ga}(r')=[\ga'',\gb'']_{\ga'}(r'')=[\ga',\gb']_{\ga''}(t)
\end{align*}
and 
\begin{align*}
&(\ga,\gb,\gc,s)= (\ga'',\gb'',\gc'',r)=(\ga',\gb',\gc',t)\\&\qquad\iff (\ga,\gb,\ga'',s')=(\ga'',\gb'',\ga,r')=(\ga'',\gb'',\ga',r'')=(\ga',\gb',\ga'',t).
\end{align*}
Clearly, the right hand side of both of these equivalances are equivalent to each other by the definition of $\Omega(X)$ and Proposition~\ref{Prop:uniformintersection}. It follows that $\varphi$ is a bijection.

Given any two points of $X$ we may find a geodesic $[\ga,\gb]$ containing them. We may therefore express any two points of $X$ as $[\ga,\gb]_{\gc}(s)$ and $[\ga,\gb]_{\gc}(t)$ for some $\ga,\gb,\gc\in\partial X$ and some $s,t\in\mathbb{R}$. Since $\varphi$ is bijective, we may therefore express any two elements of $\Omega(X)$ as $(\ga,\gb,\gc,s),(\ga,\gb,\gc,t)$ so $d_{\omega}$ is well-defined. Since $[\ga,\gb]_{\gc}$ is a geodesic we know that
\begin{align*}
d_X([\ga,\gb]_{\gc}(s),[\ga,\gb]_{\gc}(t))=\abs{s-t}
\end{align*} and 
therefore
\begin{align*}
d_{\omega}((\ga,\gb,\gc,s),(\ga,\gb,\gc,t))=d_X(\varphi(\ga,\gb,\gc,s),\varphi(\ga,\gb,\gc,t))
\end{align*}
which shows that
$d_{\omega}$ is a metric on $\Omega(X)$ and $\varphi$ is an isometry.
\end{proof}
\section*{Acknowledgements} 
I am grateful for the support received by the DFG priority programme SPP 2026 \textit{Geometry at Infinity}. 
I would like to thank Petra Schwer and Linus Kramer for suggesting this project and for helpful discussions, I would also like to thank Daniel Allcock for helping me understand the hyperbolic plane over the octonions. Some of this work was carried out in Münster, during an enjoyable and productive stay in the winter of 2022/2023, I am very grateful to Linus Kramer and Mathematics Münster for their hospitality. 
\bibliography{main}
\bibliographystyle{amsplain} 
\end{document}